\documentclass[reqno,11pt]{article} 
\setlength{\textheight}{23cm} \setlength{\textwidth}{16cm}
\setlength{\oddsidemargin}{0cm} \setlength{\evensidemargin}{0cm}
\setlength{\topmargin}{0cm}
\usepackage{amsmath, amssymb}
\usepackage{amsthm} 
%
%
%
\theoremstyle{plain} 
\newtheorem{theorem}{\indent\sc Theorem}[section]
\newtheorem{lemma}[theorem]{\indent\sc Lemma}

\newtheorem{proposition}[theorem]{\indent\sc Proposition}

\theoremstyle{definition} 

\newtheorem{remark}[theorem]{\indent\sc Remark}

%

%

\makeatletter
\def\address#1#2{\begingroup
\noindent\parbox[t]{7.8cm}{%
\small{\scshape\ignorespaces#1}\par\vskip1ex
\noindent\small{\itshape E-mail address}%
\/: #2\par\vskip4ex}\hfill%
\endgroup}%
\makeatother
%
\title{Application of Weierstrass units to relative power integral bases} 
\author{
\textsc{Ho Yun Jung, Ja Kyung Koo and Dong Hwa Shin$^*$} 
}
\date{} 
%

\begin{document}

\maketitle

\footnote{ 
2010 \textit{Mathematics Subject Classification}. Primary 11G16, Secondary 11G15, 11Y40.}
\footnote{ 
\textit{Key words and phrases}. Power integral bases, Shimura's reciprocity law,
Weierstrass units.}
\footnote{
\thanks{
The second named author was partially supported by the NRF of Korea
grant funded by MEST (2012-0000798). $^*$The corresponding author was
supported by Hankuk University of Foreign Studies Research Fund of
2012.} }

\begin{abstract}
Let $K$ be an imaginary quadratic field other than $\mathbb{Q}(\sqrt{-1})$ and
$\mathbb{Q}(\sqrt{-3})$. We construct relative power integral bases between certain abelian extensions
of $K$ in terms of Weierstrass units.
\end{abstract}

\section {Introduction}

Let $L/F$ be an extension of number fields and let $\mathcal{O}_L$ and $\mathcal{O}_F$
be the rings of integers
of $L$ and $F$, respectively.
We say that an element $\alpha$ of $L$ forms a \textit{relative power integral basis} for
$L/K$ if $\mathcal{O}_L=\mathcal{O}_F[\alpha]$. For example, if $N$ is a positive integer, then
$\zeta_N=e^{2\pi i/N}$ forms a (relative)  power integral basis for the extension
$\mathbb{Q}(\zeta_N)/\mathbb{Q}$ \cite[Theorem 2.6]{Washington}. And, in general not much has been known
about relative power integral bases except for extensions of degree less than or equal to $9$ (\cite{Gaal1}--\cite{Gaal12}).
\par
Let $K$ be an imaginary quadratic field other than $\mathbb{Q}(\sqrt{-1})$ and $\mathbb{Q}(\sqrt{-3})$.
Let $m$ and $n$ be positive integers
such that $m$ has at least two prime factors and each prime factor of $mn$ splits in $K/\mathbb{Q}$.
In this paper we shall
show that certain Weierstrass unit forms a relative power integral basis for
the ray class field modulo $(mn)$ over the compositum of the ray class field modulo $(m)$ and the ring class field of the order of conductor $mn$ of $K$
(Theorem \ref{main}).
To this end, we shall make use of an explicit description of Shimura's reciprocity law due to Stevenhagen \cite{Stevenhagen}.

\section {Weierstrass units}

For a positive integer $N$, let $\Gamma(N)$ be the principal congruence subgroup
of level $N$, namely
\begin{equation*}
\Gamma(N)=\{\gamma\in\mathrm{SL}_2(\mathbb{Z})~|~\gamma\equiv I_2\pmod{N}\}.
\end{equation*}
Then $\overline{\Gamma}(N)=
\Gamma(N)/\{\pm I_2\}$ acts on the complex upper half-plane $\mathbb{H}$ by fractional linear transformations.
Let $\mathcal{F}_N$ be the field of meromorphic modular functions for $\overline{\Gamma}(N)$ (or, of level $N$)
 whose Fourier coefficients lie in the $N$th cyclotomic field $\mathbb{Q}(\zeta_N)$.
As is well-known,
$\mathcal{F}_1$ is generated by the elliptic modular function
\begin{equation*}
j(\tau)=q^{-1}+744+196884q+21493760q^2
+864299970q^3+\cdots\quad(q=e^{2\pi i\tau})
\end{equation*}
over $\mathbb{Q}$ \cite[Chapter 6]{Lang}.
Furthermore, $\mathcal{F}_N$ is a Galois extension of $\mathcal{F}_1$
whose Galois group is isomorphic to
$\mathrm{GL}_2(\mathbb{Z}/N\mathbb{Z})/\{\pm I_2\}$ \cite[Chapter 6, Theorem 3]{Lang}. Let
$\mathcal{R}_N$ and $\mathbb{Q}\mathcal{R}_N$ be the integral closures
of $\mathbb{Z}[j(\tau)]$ and $\mathbb{Q}[j(\tau)]$ in
$\mathcal{F}_N$, respectively.
We call the elements of
$(\mathbb{Q}\mathcal{R}_N)^*$ \textit{modular units} of level $N$,
which are precisely those
elements of $\mathcal{F}_N$ having no zeros and poles on
$\mathbb{H}$ \cite[p.36]{K-L}. In particular, we call the elements of $\mathcal{R}_N^*$ \textit{modular units over $\mathbb{Z}$} of level $N$.
\par
Let $\Lambda=[\omega_1,\omega_2]$ ($=\omega_1\mathbb{Z}+\omega_2\mathbb{Z}$) be a lattice in $\mathbb{C}$. The \textit{Weierstrass $\wp$-function} relative to $\Lambda$ is defined by
\begin{equation*}
\wp(z;\Lambda)=\frac{1}{z^2}+\sum_{\omega\in\Lambda-\{0\}}
\bigg\{\frac{1}{(z-\omega)^2}-\frac{1}{\omega^2}\bigg\}\quad(z\in\mathbb{C}),
\end{equation*}
which is a meromorphic function on $z$ and periodic with respect to $\Lambda$.

\begin{lemma}\label{pm}
Let $z_1,z_2\in\mathbb{C}-\Lambda$. Then,
$\wp(z_1;\Lambda)=\wp(z_2;\Lambda)$ if and only if $z_1\equiv\pm z_2\pmod{\Lambda}$.
\end{lemma}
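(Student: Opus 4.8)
The plan is to exploit the fact that $\wp(\,\cdot\,;\Lambda)$ is an elliptic function with respect to $\Lambda$ of order $2$, and to use the standard principle that a non-constant elliptic function attains every value in $\mathbb{C}\cup\{\infty\}$ exactly as many times (counted with multiplicity) inside a fundamental parallelogram as its order, where the order is the number of poles in such a parallelogram counted with multiplicity; here that number is $2$, coming from the single double pole of $\wp$ at the origin. For the ``if'' direction I would only use two elementary properties that are immediate from the defining series: $\wp(z;\Lambda)$ is $\Lambda$-periodic (as already noted in the text), and it is an even function of $z$, since substituting $\omega\mapsto-\omega$ in the sum gives $\wp(-z;\Lambda)=\wp(z;\Lambda)$. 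Hence $z_1\equiv\pm z_2\pmod{\Lambda}$ forces $\wp(z_1;\Lambda)=\wp(\pm z_2;\Lambda)=\wp(z_2;\Lambda)$.

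For the converse, fix $z_1\in\mathbb{C}-\Lambda$, set $c=\wp(z_1;\Lambda)\in\mathbb{C}$, and consider the non-constant elliptic function $f(z)=\wp(z;\Lambda)-c$. Choosing a fundamental parallelogram whose boundary avoids the finitely many relevant points, $f$ has there a single pole, namely the double pole at $0$; so $f$ has order $2$ and hence exactly two zeros in the parallelogram, counted with multiplicity. Since $\wp$ is even, $-z_1$ is also a zero of $f$. If $2z_1\notin\Lambda$, then $z_1\not\equiv-z_1\pmod{\Lambda}$, so $z_1$ and $-z_1$ already exhaust the zeros of $f$, and any $z_2$ with $\wp(z_2;\Lambda)=c$ must satisfy $z_2\equiv\pm z_1\pmod{\Lambda}$. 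If instead $2z_1\in\Lambda$, then differentiating the series shows $\wp'$ is odd, and $\Lambda$-periodicity gives $\wp'(z_1)=\wp'(z_1-2z_1)=\wp'(-z_1)=-\wp'(z_1)$, whence $\wp'(z_1)=0$; thus $z_1$ is a zero of $f$ of multiplicity at least $2$, so it is the only zero of $f$ modulo $\Lambda$, and every $z_2$ with $\wp(z_2;\Lambda)=c$ satisfies $z_2\equiv z_1\pmod{\Lambda}$. In either case we reach the claimed conclusion.

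The one point that genuinely requires care is the half-period case $2z_1\in\Lambda$: there one must rule out the possibility that $z_1$ and $-z_1$ are two \emph{distinct} simple zeros of $f$, and this is exactly what the oddness of $\wp'$ together with $\Lambda$-periodicity achieves. Everything else is routine bookkeeping: the evenness and periodicity of $\wp$ fall straight out of the series, and the zero count is the usual consequence of the argument principle applied to $f$ (integrating $f'/f$ around the boundary of the fundamental parallelogram and using periodicity to cancel opposite sides).
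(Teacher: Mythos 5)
Your proof is correct, including the point that genuinely needs care: in the half-period case $2z_1\in\Lambda$ you rule out two distinct simple zeros by showing $\wp'(z_1)=0$ via the oddness and periodicity of $\wp'$, so that $z_1$ is a double zero and exhausts the order-$2$ count. The paper gives no argument of its own here (it simply cites Silverman), and your argument --- evenness and periodicity for the easy direction, and the standard fact that $\wp(z;\Lambda)-c$ has exactly two zeros per fundamental parallelogram for the converse --- is exactly the standard proof found in that reference, so the approaches coincide.
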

\begin{proof}
See \cite[Chaper IV, $\S$3]{Silverman}.
\end{proof}

Let $\left[\begin{matrix}r\\s\end{matrix}\right]\in(1/N)\mathbb{Z}^2-\mathbb{Z}^2$
for an integer $N$ ($\geq2$). We define
\begin{equation*}
\wp_{\left[\begin{smallmatrix}r\\s\end{smallmatrix}\right]}(\tau)=\wp(r\tau+s;[\tau,1])\quad(\tau\in\mathbb{H}).
\end{equation*}
It is a weakly holomorphic (that is, holomorphic on $\mathbb{H}$) modular form of
level $N$ and weight $2$ \cite[Chapter 6]{Lang}. We further define
\begin{equation*}
g_2(\tau)=60\sum_{\omega\in[\tau,1]-\{0\}}\frac{1}{\omega^4},\quad
g_3(\tau)=140\sum_{\omega\in[\tau,1]-\{0\}}\frac{1}{\omega^6},\quad
\Delta(\tau)=g_2(\tau)^3-27g_3(\tau)^2,
\end{equation*}
which are modular forms of level $1$ and weight $4$, $6$ and $12$, respectively. Now we define the \textit{Fricke function} $f_{\left[\begin{smallmatrix}r\\s\end{smallmatrix}\right]}(\tau)$  by
\begin{equation}\label{Fricke}
  f_{\left[\begin{smallmatrix}r\\s\end{smallmatrix}\right]}(\tau)=\frac{g_2(\tau)g_3(\tau)}{\Delta(\tau)}
  \wp_{\left[\begin{smallmatrix}r\\s\end{smallmatrix}\right]}(\tau).
\end{equation}
It depends only on
$\pm\left[\begin{matrix}r\\s\end{matrix}\right]\pmod{\mathbb{Z}^2}$ \cite[p.8]{Lang} and
is weakly holomorphic because
$\Delta(\tau)$ does not vanish on $\mathbb{H}$.

\begin{lemma}\label{Fricketransf}
$f_{\left[\begin{smallmatrix}r\\s\end{smallmatrix}\right]}(\tau)$ belongs to $\mathcal{F}_N$ and satisfies the transformation formula
\begin{equation*}
f_{\left[\begin{smallmatrix}r\\s\end{smallmatrix}\right]}(\tau)^\gamma=
f_{{}^t\gamma\left[\begin{smallmatrix}r\\s\end{smallmatrix}\right]}(\tau)\quad
(\gamma\in\mathrm{GL}_2(\mathbb{Z})/\{\pm I_2\}\simeq\mathrm{Gal}(\mathcal{F}_N/\mathcal{F}_1)),
\end{equation*}
where ${}^t\gamma$ stands for the transpose of $\gamma$.
\end{lemma}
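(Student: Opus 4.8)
The plan is to split the claim into a \emph{geometric} part---the weight, holomorphy on $\mathbb{H}$, invariance under $\overline{\Gamma}(N)$, and the action of $\mathrm{SL}_2$---and an \emph{arithmetic} part---rationality of the Fourier expansions and the action of the determinant. I would start from the transformation of $\wp_{\left[\begin{smallmatrix}r\\s\end{smallmatrix}\right]}$ under the full group $\mathrm{SL}_2(\mathbb{Z})$: for $\gamma=\left(\begin{smallmatrix}a&b\\c&d\end{smallmatrix}\right)\in\mathrm{SL}_2(\mathbb{Z})$ one has $[\gamma\tau,1]=(c\tau+d)^{-1}[\tau,1]$, so the homogeneity relation $\wp(\lambda z;\lambda\Lambda)=\lambda^{-2}\wp(z;\Lambda)$ together with the identity $r(\gamma\tau)+s=(c\tau+d)^{-1}\bigl((ar+cs)\tau+(br+ds)\bigr)$ gives
\begin{equation*}
\wp_{\left[\begin{smallmatrix}r\\s\end{smallmatrix}\right]}(\gamma\tau)=(c\tau+d)^{2}\,\wp_{{}^t\gamma\left[\begin{smallmatrix}r\\s\end{smallmatrix}\right]}(\tau).
\end{equation*}
Multiplying by the familiar weight $4$, $6$, $12$ transformation formulas for $g_2$, $g_3$, $\Delta$, the automorphy factors cancel ($(c\tau+d)^{4+6-12+2}=1$), so $f_{\left[\begin{smallmatrix}r\\s\end{smallmatrix}\right]}$ has weight $0$ and
\begin{equation*}
f_{\left[\begin{smallmatrix}r\\s\end{smallmatrix}\right]}(\gamma\tau)=f_{{}^t\gamma\left[\begin{smallmatrix}r\\s\end{smallmatrix}\right]}(\tau)\qquad(\gamma\in\mathrm{SL}_2(\mathbb{Z})).
\end{equation*}
Holomorphy on $\mathbb{H}$ is immediate since $\Delta$ has no zero there and $\wp_{\left[\begin{smallmatrix}r\\s\end{smallmatrix}\right]}$ is weakly holomorphic. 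Restricting to $\gamma\in\Gamma(N)$, one has ${}^t\gamma\left[\begin{smallmatrix}r\\s\end{smallmatrix}\right]\equiv\left[\begin{smallmatrix}r\\s\end{smallmatrix}\right]\pmod{\mathbb{Z}^2}$, so the dependence of $f_{\left[\begin{smallmatrix}r\\s\end{smallmatrix}\right]}$ only on $\pm\left[\begin{smallmatrix}r\\s\end{smallmatrix}\right]\bmod\mathbb{Z}^2$ (recorded above) shows that $f_{\left[\begin{smallmatrix}r\\s\end{smallmatrix}\right]}$ is $\overline{\Gamma}(N)$-invariant.

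To obtain $f_{\left[\begin{smallmatrix}r\\s\end{smallmatrix}\right]}\in\mathcal{F}_N$ I would invoke the classical Fourier expansion of $\wp_{\left[\begin{smallmatrix}r\\s\end{smallmatrix}\right]}$ (see \cite{Lang}): with $q=e^{2\pi i\tau}$, the normalized series $(2\pi i)^{-2}\wp_{\left[\begin{smallmatrix}r\\s\end{smallmatrix}\right]}(\tau)$ is a series in $q^{1/N}$ whose coefficients are polynomials in $e^{2\pi is}$, and hence lie in $\mathbb{Q}(\zeta_N)$; similarly $(2\pi i)^{2}g_2g_3/\Delta$ has a $q$-expansion with rational coefficients. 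Thus $f_{\left[\begin{smallmatrix}r\\s\end{smallmatrix}\right]}$ is meromorphic at $\infty$ with Fourier coefficients in $\mathbb{Q}(\zeta_N)$, and, since every cusp is an $\mathrm{SL}_2(\mathbb{Z})$-translate of $\infty$ and ${}^t\gamma\left[\begin{smallmatrix}r\\s\end{smallmatrix}\right]$ again lies in $(1/N)\mathbb{Z}^2$, the displayed $\mathrm{SL}_2(\mathbb{Z})$-formula reduces the behaviour at the remaining cusps to that at $\infty$. Hence $f_{\left[\begin{smallmatrix}r\\s\end{smallmatrix}\right]}$ is a meromorphic modular function of level $N$ with Fourier coefficients in $\mathbb{Q}(\zeta_N)$, i.e. $f_{\left[\begin{smallmatrix}r\\s\end{smallmatrix}\right]}\in\mathcal{F}_N$.

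For the transformation formula I would use the structure of $\mathrm{Gal}(\mathcal{F}_N/\mathcal{F}_1)\simeq\mathrm{GL}_2(\mathbb{Z}/N\mathbb{Z})/\{\pm I_2\}$: the subgroup $\mathrm{SL}_2(\mathbb{Z}/N\mathbb{Z})/\{\pm I_2\}$ acts through (a lift of) the fractional linear action on $\tau$, while the coset of $\mathrm{diag}(1,d)$, $d\in(\mathbb{Z}/N\mathbb{Z})^{*}$, acts on Fourier coefficients by $\zeta_N\mapsto\zeta_N^{d}$. For $\gamma\in\mathrm{SL}_2(\mathbb{Z}/N\mathbb{Z})$ the displayed formula above, applied to any lift in $\mathrm{SL}_2(\mathbb{Z})$ (the choice being irrelevant by the congruence $\bmod\,\mathbb{Z}^2$), gives $f_{\left[\begin{smallmatrix}r\\s\end{smallmatrix}\right]}^{\gamma}=f_{{}^t\gamma\left[\begin{smallmatrix}r\\s\end{smallmatrix}\right]}$. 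For $\gamma=\mathrm{diag}(1,d)$, applying $\zeta_N\mapsto\zeta_N^{d}$ to the Fourier expansion replaces $e^{2\pi is}$ by $e^{2\pi ids}$ and so sends $f_{\left[\begin{smallmatrix}r\\s\end{smallmatrix}\right]}$ to $f_{\left[\begin{smallmatrix}r\\ds\end{smallmatrix}\right]}=f_{{}^t\mathrm{diag}(1,d)\left[\begin{smallmatrix}r\\s\end{smallmatrix}\right]}$. Writing an arbitrary $\gamma\in\mathrm{GL}_2(\mathbb{Z}/N\mathbb{Z})$ as a product of an $\mathrm{SL}_2$-matrix and $\mathrm{diag}(1,\det\gamma)$, using ${}^t(AB)={}^tB\,{}^tA$, and composing the two special cases in the appropriate order then yields $f_{\left[\begin{smallmatrix}r\\s\end{smallmatrix}\right]}^{\gamma}=f_{{}^t\gamma\left[\begin{smallmatrix}r\\s\end{smallmatrix}\right]}$ in general.

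The one genuinely delicate point is the bookkeeping in the last step: matching the transpose (versus inverse) normalization of the $\mathrm{SL}_2$-action with the chosen direction of the fractional linear action, and ordering the ``determinant'' and ``$\mathrm{SL}_2$'' operations consistently with the fixed isomorphism $\mathrm{Gal}(\mathcal{F}_N/\mathcal{F}_1)\simeq\mathrm{GL}_2(\mathbb{Z}/N\mathbb{Z})/\{\pm I_2\}$; once the conventions are pinned down and the explicit $q$-expansion is in hand, the remaining verifications are routine.
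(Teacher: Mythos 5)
Your proposal is correct and follows essentially the same route as the paper's source: the paper proves this lemma by citation to Lang's \emph{Elliptic Functions} (Chapter 6, \S 2--3), and your argument---the $\mathrm{SL}_2(\mathbb{Z})$ computation via homogeneity of $\wp$, cancellation of automorphy factors, the $q^{1/N}$-expansion with coefficients in $\mathbb{Q}(\zeta_N)$, and the decomposition of $\mathrm{GL}_2(\mathbb{Z}/N\mathbb{Z})$ into $\mathrm{SL}_2$ and $\mathrm{diag}(1,d)$ acting on coefficients by $\zeta_N\mapsto\zeta_N^d$---is precisely the standard proof given there.
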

\begin{proof}
See \cite[Chapter 6, $\S$2--3]{Lang}.
\end{proof}

On the other hand, we define the
\textit{Siegel function} $g_{\left[\begin{smallmatrix}r\\s\end{smallmatrix}\right]}(\tau)$ by
\begin{equation*}
g_{\left[\begin{smallmatrix}r\\s\end{smallmatrix}\right]}(\tau)=
-q^{(1/2)
(r^2-r+1/6)}e^{\pi is(r-1)}(1-q^re^{2\pi
is})\prod_{n=1}^{\infty}(1-q^{n+r}e^{2\pi is})(1-q^{n-r}e^{-2\pi
is}).
\end{equation*}

\begin{lemma}\label{Siegellemma}
Let $M$ be the primitive denominator of
$\left[\begin{matrix}
r\\s\end{matrix}\right]$ \textup{(}that is, $M$ is the least positive integer so that
$Mr,Ms\in\mathbb{Z}$\textup{)}.
\begin{itemize}
\item[\textup{(i)}] $g_{\left[\begin{smallmatrix}
r\\s\end{smallmatrix}\right]}(\tau)^{12M}$
and $g_{\left[\begin{smallmatrix}
r\\s\end{smallmatrix}\right]}(\tau)$ are modular units of level $M$ and $12M^2$, respectively.
\item[\textup{(ii)}] $g_{\left[\begin{smallmatrix}
r\\s\end{smallmatrix}\right]}(\tau)^{12M}$ depends only on $\pm\left[\begin{matrix}r\\s\end{matrix}\right]\pmod{\mathbb{Z}^2}$
and satisfies the transformation formula
\begin{equation*}
(g_{\left[\begin{smallmatrix}
r\\s\end{smallmatrix}\right]}(\tau)^{12M})^\gamma=
g_{{}^t\gamma\left[\begin{smallmatrix}
r\\s\end{smallmatrix}\right]}(\tau)^{12M}\quad(\gamma\in\mathrm{GL}_2(\mathbb{Z}/M\mathbb{Z})/\{\pm I_2\}\simeq
\mathrm{Gal}(\mathcal{F}_M/\mathcal{F}_1)).
\end{equation*}
\item[\textup{(iii)}] Moreover, if $M$ has at least two prime factors, then $g_{\left[\begin{smallmatrix}
r\\s\end{smallmatrix}\right]}(\tau)$ is a modular unit over $\mathbb{Z}$.
\end{itemize}
\end{lemma}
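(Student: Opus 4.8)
Throughout, write $a=\left[\begin{smallmatrix}r\\s\end{smallmatrix}\right]$, so that $M$ is the least positive integer with $Ma\in\mathbb{Z}^2$ and $\gcd(Mr,Ms,M)=1$. The plan is to deduce all three parts from the transformation calculus of Siegel functions, which rests on the factorization $g_a(\tau)=c\,\mathfrak{k}_a(\tau)\eta(\tau)^2$ of $g_a$ into a Klein form $\mathfrak{k}_a$ and a power of the Dedekind $\eta$-function ($c$ an absolute constant). From this and the elementary properties of $\mathfrak{k}_a$ and $\eta$ (Kubert--Lang \cite{K-L}) I would record three facts. (A) $g_{-a}=-g_a$, and for $b\in\mathbb{Z}^2$ one has $g_{a+b}(\tau)=\mu_b\,g_a(\tau)$ with $\mu_b$ a $2M$-th root of unity; hence $g_a^{2M}$ depends only on $a\bmod\mathbb{Z}^2$. (B) For $\gamma\in\mathrm{SL}_2(\mathbb{Z})$ one has $g_a(\gamma\tau)=\varepsilon(\gamma)\,g_{{}^t\gamma a}(\tau)$, where $\varepsilon(\gamma)$ is a $12$-th root of unity \emph{not depending on $a$} (the multiplier of $\eta^2$) and $\varepsilon(\gamma)=1$ when $\gamma\in\Gamma(12)$; this is the analogue for $g_a$ of Lemma \ref{Fricketransf}. (C) $g_a$ is holomorphic and nowhere zero on $\mathbb{H}$: after reducing $0\le r<1$, each factor $1-q^{n\pm r}e^{\pm2\pi is}$ of the defining product is nonzero on $\mathbb{H}$, and if $r\equiv0$ the factor $1-e^{2\pi is}$ is a nonzero constant since $a\notin\mathbb{Z}^2$.

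For (i): if $\gamma\in\Gamma(M)$, then the entries of ${}^t\gamma-I$ are divisible by $M$ and $Ma\in\mathbb{Z}^2$, so $b:={}^t\gamma a-a\in\mathbb{Z}^2$; using (A), (B) and $\varepsilon(\gamma)^{12}=1$, $\mu_b^{2M}=1$ one gets $g_a^{12M}(\gamma\tau)=\varepsilon(\gamma)^{12M}\mu_b^{12M}\,g_a^{12M}(\tau)=g_a^{12M}(\tau)$, i.e.\ $g_a^{12M}$ is invariant under $\Gamma(M)$. Inspecting the product shows its $q$-expansion is a series in $q^{1/M}$ with coefficients in $\mathbb{Q}(\zeta_M)$ and that it has finite order at every cusp, so $g_a^{12M}\in\mathcal{F}_M$; by (C) it is then a modular unit of level $M$. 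Repeating the computation for $\gamma\in\Gamma(12M^2)\subset\Gamma(12)$, where now $b\in12M\mathbb{Z}^2$ forces $\mu_b=1$ and $\varepsilon(\gamma)=1$, together with the corresponding check of the $q$-expansion, shows $g_a\in\mathcal{F}_{12M^2}$ is a modular unit of level $12M^2$. For (ii): $g_a^{12M}$ depends only on $\pm a\bmod\mathbb{Z}^2$ by (A), since $12M$ is even; and raising (B) to the $12M$-th power gives $(g_a^{12M})^\gamma=g_{{}^t\gamma a}^{12M}$ for $\gamma$ in the image of $\mathrm{SL}_2(\mathbb{Z}/M\mathbb{Z})$. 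Since $\mathrm{GL}_2(\mathbb{Z}/M\mathbb{Z})$ is generated by $\mathrm{SL}_2(\mathbb{Z}/M\mathbb{Z})$ together with the matrices $\left(\begin{smallmatrix}1&0\\0&d\end{smallmatrix}\right)$ ($d\in(\mathbb{Z}/M\mathbb{Z})^\times$), whose action on $\mathcal{F}_M$ is $\zeta_M\mapsto\zeta_M^d$ on Fourier coefficients, it remains to compare the $q$-expansions of $g_{\left[\begin{smallmatrix}r\\s\end{smallmatrix}\right]}^{12M}$ and $g_{\left[\begin{smallmatrix}r\\ds\end{smallmatrix}\right]}^{12M}$ term by term — only $e^{2\pi is}=\zeta_M^{Ms}$ is moved, to $e^{2\pi ids}$ — so this action carries $g_a^{12M}$ to $g_{\left[\begin{smallmatrix}r\\ds\end{smallmatrix}\right]}^{12M}=g_{{}^t\gamma a}^{12M}$ for $\gamma=\left(\begin{smallmatrix}1&0\\0&d\end{smallmatrix}\right)$; hence the formula holds on all of $\mathrm{Gal}(\mathcal{F}_M/\mathcal{F}_1)\simeq\mathrm{GL}_2(\mathbb{Z}/M\mathbb{Z})/\{\pm I_2\}$.

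For (iii) I would first reduce to showing $g_a^{12M}\in\mathcal{R}_M^*$: since $g_a$ is a root of the monic polynomial $X^{12M}-g_a^{12M}$ and $g_a^{-1}$ of $X^{12M}-g_a^{-12M}$, both with coefficients in $\mathcal{R}_M\subseteq\mathcal{R}_{12M^2}$, and $\mathcal{R}_{12M^2}$ is integrally closed in $\mathcal{F}_{12M^2}$ with $g_a\in\mathcal{F}_{12M^2}$ by (i), it follows that $g_a,g_a^{-1}\in\mathcal{R}_{12M^2}$, i.e.\ $g_a\in\mathcal{R}_{12M^2}^*$. Now $g_a^{12M}$ is holomorphic and nonvanishing on $\mathbb{H}$ by (C), so it lies in $\mathcal{R}_M^*$ precisely when the $q$-expansions of both $g_a^{12M}$ and $g_a^{-12M}$ at every cusp of $\Gamma(M)$ have algebraic-integer coefficients (cf.\ \cite{K-L}). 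By (B) the expansion of $g_a^{12M}$ at the cusp $\gamma\infty$ is $g_{{}^t\gamma a}^{12M}$, and ${}^t\gamma a$ again has primitive denominator $M$ since $\gamma$ is invertible mod $M$. Reducing $0\le r'<1$, the $q$-expansion of $g_{{}^t\gamma a}^{12M}$ is term by term a $\mathbb{Z}[\zeta_M]$-linear combination of powers of $q^{1/M}$, whose leading coefficient is a root of unity if $r'\neq0$ and equals $(1-\zeta)^{12M}$ with $\zeta=e^{2\pi is'}$ a \emph{primitive} $M$-th root of unity if $r'=0$ — for then $M\mid Mr'$ together with $\gcd(Mr',Ms',M)=1$ forces $\gcd(Ms',M)=1$; inverting each geometric factor gives the analogous description of $g_{{}^t\gamma a}^{-12M}$. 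Since $M$ has at least two prime factors, $\prod_{\gcd(k,M)=1}(1-\zeta_M^k)=\Phi_M(1)=1$, so each $1-\zeta_M^k$ is a unit of $\mathbb{Z}[\zeta_M]$; hence all these expansions, and their inverses, have algebraic-integer coefficients, and $g_a^{12M}\in\mathcal{R}_M^*$, which completes the reduction.

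The main obstacle will be this last part. Parts (i) and (ii) are essentially bookkeeping with the Siegel transformation laws; the one delicate point there is keeping the $2M$-th roots of unity (from quasi-periodicity) apart from the $12$-th root of unity (from the $\eta^2$-multiplier) so as to pin down the exact levels $M$ and $12M^2$. In (iii) the real content is to have at hand the integrality criterion for $\mathcal{R}_M^*$ and to verify that, because the primitive denominator $M$ is preserved under the $\mathrm{GL}_2(\mathbb{Z}/M\mathbb{Z})$-action, the only cyclotomic numbers occurring among the leading Fourier coefficients at the cusps are $1-\zeta_M$ with $\zeta_M$ \emph{primitive}; the hypothesis that $M$ is not a prime power is then exactly what makes $g_a$ a modular unit over $\mathbb{Z}$.
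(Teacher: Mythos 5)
Your argument is essentially correct, but you should be aware that the paper itself offers no proof of this lemma: all three parts are quoted verbatim from Kubert--Lang (Chapter 2, Theorem 1.2 and Chapter 3, Theorems 5.2--5.3 for (i); Chapter 2, Proposition 1.4 for (ii); Chapter 2, Theorem 2.2(i) for (iii)). What you have written is not so much a different route as a reconstruction of Kubert--Lang's own arguments: the factorization of $g_a$ into a Klein form times $\eta^2$, the quasi-period relations producing $2M$-th roots of unity, the $\eta^2$-multiplier being a character of $\mathrm{SL}_2(\mathbb{Z})$ of order $12$ trivial on $\Gamma(12)$ (true, since the abelianization $\mathbb{Z}/12\mathbb{Z}$ factors through $\mathrm{SL}_2(\mathbb{Z}/12\mathbb{Z})$), the cusp expansions obtained from $g_a^{12M}\circ\gamma=g_{{}^t\gamma a}^{12M}$ together with the fact that ${}^t\gamma a$ keeps primitive denominator $M$, and the final punchline that $\Phi_M(1)=1$ when $M$ has at least two prime factors, so that $1-\zeta$ is a unit for every primitive $M$-th root of unity $\zeta$ --- this is exactly the K--L proof. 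Your individual reductions are sound: the passage from $g_a^{12M}\in\mathcal{R}_M^*$ to $g_a\in\mathcal{R}_{12M^2}^*$ by integral closure, the handling of the $\mathrm{GL}_2$-action in (ii) via $\mathrm{SL}_2$ plus the diagonal matrices acting on Fourier coefficients, and the key observation in (iii) that a vanishing first coordinate at a cusp forces $e^{2\pi i s'}$ to be a \emph{primitive} $M$-th root of unity. The one caveat is that several genuine ingredients are still taken on faith from Kubert--Lang rather than proved: the quasi-period and $\mathrm{SL}_2(\mathbb{Z})$-transformation relations for Klein forms, and above all the integrality criterion that a level-$M$ function, holomorphic on $\mathbb{H}$ with algebraic-integer Fourier coefficients at every cusp, is integral over $\mathbb{Z}[j]$ --- that criterion is the real content behind (iii) and is itself a nontrivial result of K--L, Chapter 2. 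So your proposal is a correct and more detailed reduction to the standard Kubert--Lang machinery --- more than the paper supplies --- but it is not an independent argument, and if it were to be included one would either cite those K--L inputs explicitly or prove them.
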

\begin{proof}
\begin{itemize}
\item[(i)] See \cite[Chapter 2, Theorem 1.2 and Chapter 3, Theorems 5.2 and 5.3]{K-L}.
\item[(ii)] See \cite[Chapter 2, Proposition 1.4]{K-L}.
\item[(iii)] See \cite[Chapter 2, Theorem 2.2(i)]{K-L}.
\end{itemize}
\end{proof}

\begin{lemma}\label{wtog} Let
$\left[\begin{matrix}a\\b\end{matrix}\right],
\left[\begin{matrix}c\\d\end{matrix}\right]\in\mathbb{Q}^2-\mathbb{Z}^2$
such that $\left[\begin{matrix}a\\b\end{matrix}\right]\not\equiv\pm\left[\begin{matrix}c\\d\end{matrix}\right]\pmod{\mathbb{Z}^2}$.
We have the relation
\begin{equation*}
  \wp_{\left[\begin{smallmatrix}a\\b\end{smallmatrix}\right]}(\tau)-
  \wp_{\left[\begin{smallmatrix}c\\d\end{smallmatrix}\right]}(\tau)=
  -\frac{g_{\left[\begin{smallmatrix}a+c\\b+d\end{smallmatrix}\right]}(\tau)
  g_{\left[\begin{smallmatrix}a-c\\b-d\end{smallmatrix}\right]}(\tau)\eta(\tau)^4}
  {g_{\left[\begin{smallmatrix}a\\b\end{smallmatrix}\right]}(\tau)^2
  g_{\left[\begin{smallmatrix}c\\d\end{smallmatrix}\right]}(\tau)^2},
\end{equation*}
where
\begin{equation*}
  \eta(\tau)=\sqrt{2\pi}\zeta_8q^{1/24}\prod_{n=1}^\infty(1-q^n).
\end{equation*}
\end{lemma}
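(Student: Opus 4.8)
The plan is to derive the relation from the classical factorization of $\wp(z_1;\Lambda)-\wp(z_2;\Lambda)$ through the Weierstrass $\sigma$-function, and then to translate $\sigma$ into Siegel functions by means of Klein forms. I would begin by recalling the Weierstrass $\sigma$-function
\begin{equation*}
\sigma(z;\Lambda)=z\prod_{\omega\in\Lambda-\{0\}}\left(1-\frac{z}{\omega}\right)\exp\left(\frac{z}{\omega}+\frac{z^2}{2\omega^2}\right),
\end{equation*}
which is odd, entire, and has simple zeros precisely at the points of $\Lambda$, together with its logarithmic derivative $\zeta(z;\Lambda)=\sigma'(z;\Lambda)/\sigma(z;\Lambda)$, whose quasi-periods $\zeta(z+\omega;\Lambda)-\zeta(z;\Lambda)$ ($\omega\in\Lambda$) define an additive map $\eta_\Lambda$ on $\Lambda$, extended by linearity in the lattice coordinates (this $\eta_\Lambda$ is not to be confused with the Dedekind eta $\eta(\tau)$). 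The first step is the standard identity
\begin{equation*}
\wp(z_1;\Lambda)-\wp(z_2;\Lambda)=-\frac{\sigma(z_1+z_2;\Lambda)\,\sigma(z_1-z_2;\Lambda)}{\sigma(z_1;\Lambda)^2\,\sigma(z_2;\Lambda)^2}\qquad(z_1,z_2,z_1\pm z_2\notin\Lambda),
\end{equation*}
which I would prove by fixing $z_2$ and observing that the ratio of the two sides, regarded as a function of $z_1$, is holomorphic and nowhere zero on $\mathbb{C}$ and $\Lambda$-periodic --- the quasi-period factors picked up by $\sigma$ in the numerator and in the denominator cancel --- hence constant, and then comparing the leading Laurent terms at $z_1=0$, where both sides behave like $z_1^{-2}$, to see that this constant is $1$. (This identity is classical; see, e.g., \cite{Lang}.)

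Next I would specialize to $\Lambda=[\tau,1]$ and set $z_1=a\tau+b$, $z_2=c\tau+d$, so that $z_1\pm z_2=(a\pm c)\tau+(b\pm d)$. The hypotheses $\left[\begin{smallmatrix}a\\b\end{smallmatrix}\right],\left[\begin{smallmatrix}c\\d\end{smallmatrix}\right]\in\mathbb{Q}^2-\mathbb{Z}^2$ and $\left[\begin{smallmatrix}a\\b\end{smallmatrix}\right]\not\equiv\pm\left[\begin{smallmatrix}c\\d\end{smallmatrix}\right]\pmod{\mathbb{Z}^2}$ guarantee that $z_1,z_2,z_1+z_2,z_1-z_2\notin\Lambda$, so the $\sigma$-identity applies and, by Lemma \ref{pm}, its left-hand side is nonzero; the same hypotheses force all four vectors $\left[\begin{smallmatrix}a\\b\end{smallmatrix}\right],\left[\begin{smallmatrix}c\\d\end{smallmatrix}\right],\left[\begin{smallmatrix}a+c\\b+d\end{smallmatrix}\right],\left[\begin{smallmatrix}a-c\\b-d\end{smallmatrix}\right]$ to be non-integral, so the relevant Siegel functions are holomorphic and nowhere zero on $\mathbb{H}$. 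I would then bring in the Klein form $\mathfrak{k}_{\left[\begin{smallmatrix}r\\s\end{smallmatrix}\right]}(\tau)=\exp\left(-\tfrac12\eta_\Lambda(r\tau+s)(r\tau+s)\right)\sigma(r\tau+s;[\tau,1])$ and the relation $g_{\left[\begin{smallmatrix}r\\s\end{smallmatrix}\right]}(\tau)=\mathfrak{k}_{\left[\begin{smallmatrix}r\\s\end{smallmatrix}\right]}(\tau)\,\eta(\tau)^2$ from \cite[Chapter 2]{K-L}; here the normalizing factor $\sqrt{2\pi}\,\zeta_8$ built into the definition of $\eta(\tau)$ is exactly what makes $\eta(\tau)^{24}=\Delta(\tau)$, hence $\eta(\tau)^2$ a consistent twelfth root of $\Delta(\tau)$ and this clean relation valid. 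Substituting $\sigma(w;[\tau,1])=\exp\left(\tfrac12\eta_\Lambda(w)w\right)g_{\ldots}(\tau)/\eta(\tau)^2$ for $w=z_1+z_2,\,z_1-z_2,\,z_1,\,z_2$ into the $\sigma$-identity, the four exponential prefactors assemble into $\exp\left(\tfrac12\left(\eta_\Lambda(z_1+z_2)(z_1+z_2)+\eta_\Lambda(z_1-z_2)(z_1-z_2)-2\eta_\Lambda(z_1)z_1-2\eta_\Lambda(z_2)z_2\right)\right)$, whose exponent vanishes identically because $\eta_\Lambda$ is additive; so the exponentials drop out entirely.

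It then remains only to keep track of the powers of $\eta(\tau)$: the numerator contributes a factor $\eta(\tau)^{-2}\cdot\eta(\tau)^{-2}$ and the denominator $\eta(\tau)^{-4}\cdot\eta(\tau)^{-4}$, leaving the overall factor $\eta(\tau)^{4}$, which produces exactly the claimed formula. I expect the main obstacle to be the normalization bookkeeping --- making sure the precise constant $\sqrt{2\pi}\,\zeta_8$ in $\eta(\tau)$ and the exponent in $g=\mathfrak{k}\,\eta(\tau)^2$ are correct, so that the power of $\eta(\tau)$ surviving at the end is $4$ and not some other even integer. By contrast, the cancellation of the Weierstrass quasi-period factors, though it looks like the delicate point, is automatic from the additivity of $\eta_\Lambda$, and the proof of the underlying $\sigma$-identity is entirely routine.
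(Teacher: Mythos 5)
Your proposal is correct and is essentially the same approach as the paper's source: the paper gives no argument of its own beyond citing Kubert--Lang (p.\ 51), and the derivation there is exactly your route --- the classical factorization $\wp(z_1;\Lambda)-\wp(z_2;\Lambda)=-\dfrac{\sigma(z_1+z_2;\Lambda)\sigma(z_1-z_2;\Lambda)}{\sigma(z_1;\Lambda)^2\sigma(z_2;\Lambda)^2}$, cancellation of the quasi-period exponentials via the linearly extended $\eta_\Lambda$ (equivalently, passing to Klein forms), and conversion to Siegel functions. Your normalization bookkeeping also checks out: with the paper's $\eta(\tau)=\sqrt{2\pi}\,\zeta_8\,q^{1/24}\prod_{n\geq1}(1-q^n)$ one has $\eta(\tau)^2=2\pi i\,q^{1/12}\prod_{n\geq1}(1-q^n)^2$, and a direct comparison of $q$-products (using the Legendre relation) confirms $g_{(r,s)}(\tau)=\mathfrak{k}_{(r,s)}(\tau)\,\eta(\tau)^2$ exactly, so the surviving factor is indeed $\eta(\tau)^4$ with the stated sign.
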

\begin{proof}
See \cite[p.51]{K-L}.
\end{proof}

Let $m$ ($\geq2$) and $n$ be positive integers. We define
\begin{equation}
h_{m,n}(\tau)=\frac{\wp_{\left[\begin{smallmatrix}0\\1/mn\end{smallmatrix}\right]}(\tau)
-\wp_{\left[\begin{smallmatrix}1/m\\0\end{smallmatrix}\right]}(\tau)}
{\wp_{\left[\begin{smallmatrix}0\\1/m\end{smallmatrix}\right]}(\tau)
-\wp_{\left[\begin{smallmatrix}1/m\\0\end{smallmatrix}\right]}(\tau)}.
\end{equation}

\begin{proposition}\label{h}
$h_{m,n}(\tau)$ is a modular unit of level $mn$. And,
if $m$ has at least two prime factors, then $h_{m,n}(\tau)$ is a modular unit over $\mathbb{Z}$.
\end{proposition}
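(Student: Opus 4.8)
The plan is to settle the two claims by rewriting $h_{m,n}(\tau)$ first via Fricke functions, so as to pin down its level, and then via Siegel functions, so as to handle integrality. As a preliminary observation, the pair $\left[\begin{smallmatrix}0\\1/mn\end{smallmatrix}\right],\left[\begin{smallmatrix}1/m\\0\end{smallmatrix}\right]$ and the pair $\left[\begin{smallmatrix}0\\1/m\end{smallmatrix}\right],\left[\begin{smallmatrix}1/m\\0\end{smallmatrix}\right]$ both consist of vectors in $\mathbb{Q}^2-\mathbb{Z}^2$ and are in each case inequivalent modulo $\mathbb{Z}^2$ up to sign, since their first coordinates $0$ and $\pm1/m$ already fail to be congruent modulo $\mathbb{Z}$ (here $m\geq2$). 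Hence, by Lemma \ref{pm}, the numerator and the denominator of $h_{m,n}(\tau)$ --- each holomorphic on $\mathbb{H}$ by the weak holomorphy of the functions $\wp_{\left[\begin{smallmatrix}r\\s\end{smallmatrix}\right]}$ --- are nowhere zero on $\mathbb{H}$, and so $h_{m,n}(\tau)$ is itself holomorphic and nowhere vanishing on $\mathbb{H}$.

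To identify the level, I multiply the numerator and the denominator of $h_{m,n}(\tau)$ by $g_2(\tau)g_3(\tau)/\Delta(\tau)$; by the definition \eqref{Fricke} of the Fricke function this rewrites
\begin{equation*}
h_{m,n}(\tau)=\frac{f_{\left[\begin{smallmatrix}0\\1/mn\end{smallmatrix}\right]}(\tau)-f_{\left[\begin{smallmatrix}1/m\\0\end{smallmatrix}\right]}(\tau)}{f_{\left[\begin{smallmatrix}0\\1/m\end{smallmatrix}\right]}(\tau)-f_{\left[\begin{smallmatrix}1/m\\0\end{smallmatrix}\right]}(\tau)}.
\end{equation*}
Each of the three vectors lies in $(1/mn)\mathbb{Z}^2-\mathbb{Z}^2$, so Lemma \ref{Fricketransf} places the three Fricke functions in $\mathcal{F}_{mn}$, and the denominator is a nonzero element of $\mathcal{F}_{mn}$ by the nonvanishing just noted; hence $h_{m,n}(\tau)\in\mathcal{F}_{mn}$. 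Together with the nonvanishing on $\mathbb{H}$, this proves the first assertion: $h_{m,n}(\tau)$ is a modular unit of level $mn$.

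For the integrality assertion, I apply Lemma \ref{wtog} twice, to the pairs $\big(\left[\begin{smallmatrix}0\\1/mn\end{smallmatrix}\right],\left[\begin{smallmatrix}1/m\\0\end{smallmatrix}\right]\big)$ and $\big(\left[\begin{smallmatrix}0\\1/m\end{smallmatrix}\right],\left[\begin{smallmatrix}1/m\\0\end{smallmatrix}\right]\big)$, and divide the two resulting identities; the common factors $\eta(\tau)^4$ and $g_{\left[\begin{smallmatrix}1/m\\0\end{smallmatrix}\right]}(\tau)^2$ cancel, leaving
\begin{equation*}
h_{m,n}(\tau)=\frac{g_{\left[\begin{smallmatrix}1/m\\1/mn\end{smallmatrix}\right]}(\tau)\,g_{\left[\begin{smallmatrix}-1/m\\1/mn\end{smallmatrix}\right]}(\tau)\,g_{\left[\begin{smallmatrix}0\\1/m\end{smallmatrix}\right]}(\tau)^{2}}{g_{\left[\begin{smallmatrix}0\\1/mn\end{smallmatrix}\right]}(\tau)^{2}\,g_{\left[\begin{smallmatrix}1/m\\1/m\end{smallmatrix}\right]}(\tau)\,g_{\left[\begin{smallmatrix}-1/m\\1/m\end{smallmatrix}\right]}(\tau)}.
\end{equation*}
Now suppose $m$ has at least two prime factors; then so does $mn$. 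Each of the six vectors occurring has primitive denominator $m$ or $mn$, hence with at least two prime factors, so Lemma \ref{Siegellemma}(iii) makes each of the six Siegel functions a modular unit over $\mathbb{Z}$; since the level $12M^2$ of each (with $M\in\{m,mn\}$) divides $12(mn)^2$, each of them and its inverse lies in $\mathcal{R}_{12(mn)^2}$, so each lies in $\mathcal{R}_{12(mn)^2}^*$, and therefore $h_{m,n}(\tau)\in\mathcal{R}_{12(mn)^2}^*$. Finally, since $h_{m,n}(\tau)\in\mathcal{F}_{mn}$ and $\mathcal{R}_{mn}=\mathcal{R}_{12(mn)^2}\cap\mathcal{F}_{mn}$ (integrality over $\mathbb{Z}[j(\tau)]$ not depending on the ambient field), both $h_{m,n}(\tau)$ and its inverse lie in $\mathcal{R}_{mn}$; that is, $h_{m,n}(\tau)$ is a modular unit over $\mathbb{Z}$ of level $mn$.

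The step requiring the most care is the level bookkeeping in the integrality part: by Lemma \ref{Siegellemma}(i) a single Siegel function $g_{\left[\begin{smallmatrix}r\\s\end{smallmatrix}\right]}$ only lives at level $12M^2$, so the level of $h_{m,n}(\tau)$ drops to $mn$ only after the cancellations in the above identity. The Fricke-function rewriting is exactly what certifies membership of $h_{m,n}(\tau)$ in $\mathcal{F}_{mn}$, and the closing intersection argument is what brings the integrality down from level $12(mn)^2$ to level $mn$. The remaining ingredients --- the two applications of Lemma \ref{wtog} with the elementary cancellation, and the nonvanishing on $\mathbb{H}$ via Lemma \ref{pm} --- are routine.
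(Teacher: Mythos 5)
Your proof is correct and follows essentially the same route as the paper: the Fricke-function rewriting via (\ref{Fricke}) and Lemma \ref{Fricketransf} to place $h_{m,n}(\tau)$ in $\mathcal{F}_{mn}$, and the double application of Lemma \ref{wtog} yielding the same Siegel-function expression, with Lemma \ref{Siegellemma}(iii) giving the unit-over-$\mathbb{Z}$ claim. The only (harmless) deviations are that you obtain the no-zeros-no-poles property directly from Lemma \ref{pm} (pointwise nonvanishing of numerator and denominator) rather than from the Siegel factorization and Lemma \ref{Siegellemma}(i) as the paper does, and that your closing level bookkeeping ($\mathcal{R}_{mn}=\mathcal{R}_{12(mn)^2}\cap\mathcal{F}_{mn}$) makes explicit a point the paper leaves implicit.
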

\begin{proof}
It follows rom Lemma \ref{pm} that the denominator of $h_{m,n}(\tau)$ is not the zero function.
Furthermore, since
\begin{equation}\label{hf}
h_{m,n}(\tau)=\frac{f_{\left[\begin{smallmatrix}0\\1/mn\end{smallmatrix}\right]}(\tau)
-f_{\left[\begin{smallmatrix}1/m\\0\end{smallmatrix}\right]}(\tau)}
{f_{\left[\begin{smallmatrix}0\\1/m\end{smallmatrix}\right]}(\tau)
-f_{\left[\begin{smallmatrix}1/m\\0\end{smallmatrix}\right]}(\tau)}
\end{equation}
by the definition (\ref{Fricke}), it belongs to $\mathcal{F}_{mn}$ by Lemma \ref{Fricketransf}.
\par
On the other hand, we see that
\begin{eqnarray*}
h_{m,n}(\tau)&=&\frac{-g_{\left[\begin{smallmatrix}1/m\\1/mn\end{smallmatrix}\right]}(\tau)
g_{\left[\begin{smallmatrix}-1/m\\1/mn\end{smallmatrix}\right]}(\tau)\eta(\tau)^4/
g_{\left[\begin{smallmatrix}0\\1/mn\end{smallmatrix}\right]}(\tau)^2
g_{\left[\begin{smallmatrix}1/m\\0\end{smallmatrix}\right]}(\tau)^2}
{-g_{\left[\begin{smallmatrix}1/m\\1/m\end{smallmatrix}\right]}(\tau)
g_{\left[\begin{smallmatrix}-1/m\\1/m\end{smallmatrix}\right]}(\tau)\eta(\tau)^4/
g_{\left[\begin{smallmatrix}0\\1/m\end{smallmatrix}\right]}(\tau)^2
g_{\left[\begin{smallmatrix}1/m\\0\end{smallmatrix}\right]}(\tau)^2}\quad\textrm{by Lemma \ref{wtog}}\\
&=&\frac{g_{\left[\begin{smallmatrix}1/m\\1/mn\end{smallmatrix}\right]}(\tau)
g_{\left[\begin{smallmatrix}-1/m\\1/mn\end{smallmatrix}\right]}(\tau)
g_{\left[\begin{smallmatrix}0\\1/m\end{smallmatrix}\right]}(\tau)^2}
{g_{\left[\begin{smallmatrix}1/m\\1/m\end{smallmatrix}\right]}(\tau)
g_{\left[\begin{smallmatrix}-1/m\\1/m\end{smallmatrix}\right]}(\tau)
g_{\left[\begin{smallmatrix}0\\1/mn\end{smallmatrix}\right]}(\tau)^2}.
\end{eqnarray*}
This yields by Lemma \ref{Siegellemma}(i) that $h_{m,n}(\tau)$ is a modular unit.
Moreover, if $m$ has at least two prime factors, then each of
\begin{equation*}
\left[\begin{matrix}1/m\\1/mn\end{matrix}\right],
\left[\begin{matrix}-1/m\\1/mn\end{matrix}\right],
\left[\begin{matrix}0\\1/m\end{matrix}\right],
\left[\begin{matrix}1/m\\1/m\end{matrix}\right],
\left[\begin{matrix}-1/m\\1/m\end{matrix}\right],
\left[\begin{matrix}0\\1/mn\end{matrix}\right]
\end{equation*}
has the primitive denominator that has at least two prime factors.
Therefore  $h_{m,n}(\tau)$ is a modular unit over $\mathbb{Z}$ by Lemma \ref{Siegellemma}(iii).
\end{proof}

\begin{remark}
The modular unit $h_{m,n}(\tau)$ is called a \textit{Weierstrass unit} \cite[Chapter 2, $\S$6]{K-L}.
\end{remark}

\section {Shimura's reciprocity law}

Throughout this section let $K$ be an imaginary quadratic field of discriminant $d_K$ other than $\mathbb{Q}(\sqrt{-1})$ and $\mathbb{Q}(\sqrt{-3})$.
And, set
\begin{equation}\label{theta}
  \theta_K=\frac{d_K+\sqrt{d_K}}{2}.
\end{equation}
It then belongs to $\mathbb{H}$ and forms a (relative) power integral basis for $K/\mathbb{Q}$.
\par
For a nonzero ideal $\mathfrak{f}$ of $\mathcal{O}_K$ we denote the ray class field modulo
$\mathfrak{f}$ by $K_\mathfrak{f}$. Furthermore,
if $\mathcal{O}=[N\theta_K,1]$ is the order of conductor $N$ ($\geq1$) of $K$, then we denote
the ring class field of the order $\mathcal{O}$ by $H_\mathcal{O}$.
As a consequence of the main theorem of complex multiplication we
have the following lemma.

\begin{lemma}\label{cm}
Let $N$ be a positive integer.
\begin{itemize}
\item[\textup{(i)}]
We have
$K_{(N)}=K(f(\theta_K)~|~f\in\mathcal{F}_N~\textrm{is finite at}~\theta_K)$.
\item[\textup{(ii)}]
If $N\geq2$, then
$K_{(N)}=K_{(1)}(f_{\left[\begin{smallmatrix}0\\1/N\end{smallmatrix}\right]}(\theta_K))$.
\end{itemize}
\end{lemma}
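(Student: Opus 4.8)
The plan is to derive Lemma \ref{cm} from the classical main theorem of complex multiplication together with Shimura's reciprocity law for the action of ideles on $\mathcal{F}_N$-values at CM points. Part (i) is essentially the statement that $K_{(N)}$ is generated over $K$ by the values at $\theta_K$ of the modular functions of level $N$ that are finite there; this is a standard consequence of the fact that $\mathrm{Gal}(\mathcal{F}_N/\mathcal{F}_1)\simeq\mathrm{GL}_2(\mathbb{Z}/N\mathbb{Z})/\{\pm I_2\}$ and Shimura's description of how $\mathrm{Gal}(K_{(N)}/K_{(1)})$ acts, via the identification $\mathrm{Gal}(K_{(N)}/K_{(1)})\simeq(\mathcal{O}_K/N\mathcal{O}_K)^*/\{\pm1\}$ embedded in $\mathrm{GL}_2(\mathbb{Z}/N\mathbb{Z})/\{\pm I_2\}$ through the regular representation with respect to the basis $[\theta_K,1]$.

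For part (i), I would argue as follows. First recall that $j(\theta_K)$ generates the Hilbert class field $K_{(1)}$ over $K$; this is the $N=1$ case. For $N\ge 2$, set $L=K(f(\theta_K)\mid f\in\mathcal{F}_N\text{ finite at }\theta_K)$. Each such $f$ is algebraic over $\mathbb{Q}(j)$, hence $f(\theta_K)$ is algebraic over $K_{(1)}$, and the main theorem of complex multiplication shows $f(\theta_K)\in K_{(N)}$; thus $L\subseteq K_{(N)}$. Conversely, Shimura's reciprocity law identifies the action of $\sigma\in\mathrm{Gal}(K_{(N)}/K_{(1)})$ on the values $f(\theta_K)$ with the action of the matrix in $\mathrm{GL}_2(\mathbb{Z}/N\mathbb{Z})/\{\pm I_2\}$ corresponding to $\sigma$ under the regular representation on $(\mathcal{O}_K/N\mathcal{O}_K)^*/\{\pm1\}$. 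Since this representation is faithful (as $K\ne\mathbb{Q}(\sqrt{-1}),\mathbb{Q}(\sqrt{-3})$ there are no extra units, so $\{\pm1\}$ is the full unit group and the map $(\mathcal{O}_K/N\mathcal{O}_K)^*/\{\pm1\}\hookrightarrow\mathrm{GL}_2(\mathbb{Z}/N\mathbb{Z})/\{\pm I_2\}$ is injective), any $\sigma\ne 1$ acts nontrivially on some $f(\theta_K)$ with $f\in\mathcal{F}_N$; picking such an $f$ that is moreover finite at $\theta_K$ (one can always adjust by multiplying by a suitable power of $j-j(\theta_K)$ or by a level-$1$ function, which does not change the value and keeps the function in $\mathcal{F}_N$), we conclude that $\mathrm{Gal}(K_{(N)}/L)=1$, i.e. $L=K_{(N)}$.

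For part (ii), it suffices to show that the single value $f_{\left[\begin{smallmatrix}0\\1/N\end{smallmatrix}\right]}(\theta_K)$ already generates $K_{(N)}$ over $K_{(1)}$. By Lemma \ref{Fricketransf} the Galois action of $\gamma\in\mathrm{GL}_2(\mathbb{Z}/N\mathbb{Z})/\{\pm I_2\}$ sends $f_{\left[\begin{smallmatrix}0\\1/N\end{smallmatrix}\right]}$ to $f_{{}^t\gamma\left[\begin{smallmatrix}0\\1/N\end{smallmatrix}\right]}$, and by Lemma \ref{pm} (applied through the Fricke function, which is a specific rational function of $\wp$ and the $g_i$) the values $f_{\left[\begin{smallmatrix}r\\s\end{smallmatrix}\right]}(\theta_K)$ for distinct $\pm\left[\begin{smallmatrix}r\\s\end{smallmatrix}\right]\pmod{\mathbb{Z}^2}$ are distinct, since $\wp(r\theta_K+s;[\theta_K,1])$ separates the points $r\theta_K+s\pmod{[\theta_K,1]}$ up to sign. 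Hence the stabilizer in $\mathrm{Gal}(K_{(N)}/K_{(1)})$ of $f_{\left[\begin{smallmatrix}0\\1/N\end{smallmatrix}\right]}(\theta_K)$ consists precisely of those $\sigma$ whose associated matrix $\gamma$ fixes $\pm\left[\begin{smallmatrix}0\\1/N\end{smallmatrix}\right]$; but under the regular representation with respect to $[\theta_K,1]$, a unit $u\in(\mathcal{O}_K/N\mathcal{O}_K)^*$ acts on the column $\left[\begin{smallmatrix}0\\1/N\end{smallmatrix}\right]$ (which corresponds to the element $1/N\in\frac{1}{N}\mathcal{O}_K/\mathcal{O}_K$) by multiplication, so it fixes it up to sign iff $u\equiv\pm1\pmod{N}$, i.e. iff $\sigma=1$. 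Therefore $f_{\left[\begin{smallmatrix}0\\1/N\end{smallmatrix}\right]}(\theta_K)$ has trivial stabilizer and generates $K_{(N)}$ over $K_{(1)}$, which combined with $K_{(1)}=K(j(\theta_K))$ and the fact that $j$ is a polynomial in the Fricke values (or simply that $j(\theta_K)\in K_{(1)}\subseteq K_{(N)}$) gives the claimed equality.

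The main obstacle is making the passage through Shimura's reciprocity law precise: one must correctly set up the identification of $\mathrm{Gal}(K_{(N)}/K)$ (or $\mathrm{Gal}(K_{(N)}/K_{(1)})$) with a subgroup of $\mathrm{GL}_2(\mathbb{Z}/N\mathbb{Z})/\{\pm I_2\}$ via the action of the finite ideles on the lattice $[\theta_K,1]$, and verify that under this identification the action on a modular function $f\in\mathcal{F}_N$ evaluated at $\theta_K$ matches the Galois action of the corresponding matrix on $f$ itself. This is exactly the content of the explicit form of Shimura's reciprocity law (as in Stevenhagen's description cited in the introduction), so in practice one simply invokes that; the remaining work — checking faithfulness of the regular representation using the hypothesis that $K\ne\mathbb{Q}(\sqrt{-1}),\mathbb{Q}(\sqrt{-3})$, and checking that $\left[\begin{smallmatrix}0\\1/N\end{smallmatrix}\right]$ has trivial stabilizer under scalar multiplication — is elementary.
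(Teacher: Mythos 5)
Your outline is essentially sound, but note that the paper does not prove this lemma at all: it simply cites Lang, \emph{Elliptic Functions}, Chapter 10 (Corollary to Theorem 2 for (i), Corollary to Theorem 7 for (ii)), these being standard consequences of the main theorem of complex multiplication. What you have written is, in effect, a reconstruction of the classical proofs of those corollaries: the easy inclusion from the main theorem of CM, and the reverse inclusion from the fact that the Weber/Fricke values separate $N$-torsion up to sign (valid precisely because $K\neq\mathbb{Q}(\sqrt{-1}),\mathbb{Q}(\sqrt{-3})$, so $g_2(\theta_K)g_3(\theta_K)\neq 0$ and Lemma \ref{pm} applies), together with the identification of $\mathrm{Gal}(K_{(N)}/K_{(1)})$ with $(\mathcal{O}_K/N\mathcal{O}_K)^*/\{\pm1\}$ acting on $\frac{1}{N}\mathcal{O}_K/\mathcal{O}_K$ by multiplication; your stabilizer computation for $\left[\begin{smallmatrix}0\\1/N\end{smallmatrix}\right]$ is correct. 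Two caveats. First, the parenthetical repair in (i) is wrong as stated: multiplying $f$ by a power of $j-j(\theta_K)$ does change (indeed kills) the value at $\theta_K$, so it cannot be used to replace a function with a pole there; the clean fix is to drop this remark entirely and argue, as you do in (ii), with the Fricke functions $f_{\left[\begin{smallmatrix}r\\s\end{smallmatrix}\right]}$, which generate $\mathcal{F}_N$ over $\mathcal{F}_1$, are finite at $\theta_K$, and already detect every nontrivial $\sigma$. Second, there is a mild circularity risk in deducing (i) from the reciprocity isomorphism in the form of Lemma \ref{Shimura}(i), since the very statement that $\alpha\mapsto(f(\theta_K)\mapsto f^\alpha(\theta_K))$ defines an automorphism of $K_{(N)}$ over $K_{(1)}$ implicitly presupposes that these values generate $K_{(N)}$; the textbook route (and the one the cited sources take) derives (i) directly from the idelic main theorem of complex multiplication and then obtains the matrix description. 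Neither point is fatal --- the lemma is classical and correctly cited --- but if you intend your sketch as an actual proof, you should fix the first and reorganize to avoid the second.
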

\begin{proof}
\begin{itemize}
\item[(i)] See \cite[Chapter 10, Corollary to Theorem 2]{Lang}.
\item[(ii)] See \cite[Chapter 10, Corollary to Theorem 7]{Lang}.
\end{itemize}
\end{proof}

\begin{lemma}\label{jintegral}
If $\theta\in\mathbb{H}$ is imaginary quadratic, then $j(\theta)$ is an algebraic integer.
\end{lemma}
\begin{proof}
See \cite[Theorem 4.14]{Shimura}.
\end{proof}

\begin{proposition}\label{generate}
Let $m$ \textup{(}$\geq2$\textup{)} and $n$ be positive integers.
Then
$h_{m,n}(\theta_K)$ generates
$K_{(mn)}$ over $K_{(m)}$. Moreover, if $m$ has at least two prime factors, then
$h_{m,n}(\theta_K)$ is a unit of $\mathcal{O}_{K_{(mn)}}$.
\end{proposition}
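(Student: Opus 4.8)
\noindent The plan is to show that $h_{m,n}(\theta_K)$ lies in $K_{(mn)}$, that its stabilizer in $\mathrm{Gal}(K_{(mn)}/K_{(m)})$ is trivial (via Shimura's reciprocity law in Stevenhagen's explicit form), and finally that it and its inverse are algebraic integers. First, by Proposition \ref{h} the function $h_{m,n}$ is a modular unit of level $mn$, hence lies in $\mathcal{F}_{mn}$ and is holomorphic and nowhere zero on $\mathbb{H}$; so $h_{m,n}(\theta_K)$ is a well-defined nonzero number, and it belongs to $K_{(mn)}$ by Lemma \ref{cm}(i). Thus $K_{(m)}\subseteq K_{(m)}(h_{m,n}(\theta_K))\subseteq K_{(mn)}$, and it remains to prove that every $\sigma\in\mathrm{Gal}(K_{(mn)}/K_{(m)})$ with $\sigma(h_{m,n}(\theta_K))=h_{m,n}(\theta_K)$ is the identity. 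Since $K_{(1)}$ is the Hilbert class field of $K$, it is contained in $K_{(m)}$, so $\sigma\in\mathrm{Gal}(K_{(mn)}/K_{(1)})$; because $\mathcal{O}_K^*=\{\pm1\}$ (as $K\neq\mathbb{Q}(\sqrt{-1}),\mathbb{Q}(\sqrt{-3})$), this group is $(\mathcal{O}_K/mn\mathcal{O}_K)^*/\{\pm1\}$, and for a representative $x\in(\mathcal{O}_K/mn\mathcal{O}_K)^*$ of $\sigma$ the reciprocity law (combined with Lemma \ref{Fricketransf}) gives $f_{\left[\begin{smallmatrix}r\\s\end{smallmatrix}\right]}(\theta_K)^\sigma=f_{\left[\begin{smallmatrix}r'\\s'\end{smallmatrix}\right]}(\theta_K)$ with $\left[\begin{smallmatrix}r'\\s'\end{smallmatrix}\right]=M_{x^{-1}}\left[\begin{smallmatrix}r\\s\end{smallmatrix}\right]$, where $M_{x^{-1}}\in\mathrm{GL}_2(\mathbb{Z}/mn\mathbb{Z})$ is the matrix of multiplication by $x^{-1}$ on $\mathcal{O}_K/mn\mathcal{O}_K$ relative to the basis $\{\theta_K,1\}$. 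As $\sigma$ restricts to the identity on $K_{(m)}$, which corresponds to reducing $x$ modulo $m$, we have $x\equiv\pm1\pmod m$; replacing $x$ by $-x$, I may assume $x\equiv1\pmod m$.

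\noindent The key step is then a direct matrix computation. Writing $x^{-1}=t+s\theta_K$ in $\mathcal{O}_K/mn\mathcal{O}_K$ one has $s\equiv0$, $t\equiv1\pmod m$, say $s=ms'$ and $t=1+mt'$; and with the minimal polynomial of $\theta_K$ written as $X^2+B_KX+C_K$ (so $B_K=-d_K$, $C_K=(d_K^2-d_K)/4$) one has $M_{x^{-1}}=\left(\begin{smallmatrix}t-sB_K & s\\ -sC_K & t\end{smallmatrix}\right)$. A short calculation shows that, modulo $\mathbb{Z}^2$, $M_{x^{-1}}$ fixes $\left[\begin{smallmatrix}1/m\\0\end{smallmatrix}\right]$ and $\left[\begin{smallmatrix}0\\1/m\end{smallmatrix}\right]$ and sends $\left[\begin{smallmatrix}0\\1/mn\end{smallmatrix}\right]$ to $\left[\begin{smallmatrix}s'/n\\ 1/mn+t'/n\end{smallmatrix}\right]$. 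Substituting into the expression (\ref{hf}) of $h_{m,n}$ in terms of Fricke functions, and noting that the common denominator $f_{\left[\begin{smallmatrix}0\\1/m\end{smallmatrix}\right]}(\theta_K)-f_{\left[\begin{smallmatrix}1/m\\0\end{smallmatrix}\right]}(\theta_K)$ is nonzero (as $\left[\begin{smallmatrix}0\\1/m\end{smallmatrix}\right]\not\equiv\pm\left[\begin{smallmatrix}1/m\\0\end{smallmatrix}\right]\pmod{\mathbb{Z}^2}$), the equality $\sigma(h_{m,n}(\theta_K))=h_{m,n}(\theta_K)$ becomes $f_{\left[\begin{smallmatrix}s'/n\\1/mn+t'/n\end{smallmatrix}\right]}(\theta_K)=f_{\left[\begin{smallmatrix}0\\1/mn\end{smallmatrix}\right]}(\theta_K)$. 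Since by (\ref{Fricke}) $f_{\left[\begin{smallmatrix}r\\s\end{smallmatrix}\right]}(\theta_K)=\frac{g_2(\theta_K)g_3(\theta_K)}{\Delta(\theta_K)}\,\wp(r\theta_K+s;[\theta_K,1])$ with $\frac{g_2(\theta_K)g_3(\theta_K)}{\Delta(\theta_K)}\neq0$ (again because $j(\theta_K)\neq0,1728$, i.e. $K\neq\mathbb{Q}(\sqrt{-3}),\mathbb{Q}(\sqrt{-1})$), Lemma \ref{pm} together with the $\mathbb{Q}$-linear independence of $1$ and $\theta_K$ turns this into $\left[\begin{smallmatrix}s'/n\\1/mn+t'/n\end{smallmatrix}\right]\equiv\pm\left[\begin{smallmatrix}0\\1/mn\end{smallmatrix}\right]\pmod{\mathbb{Z}^2}$. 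The $+$ sign gives $s'\equiv t'\equiv0\pmod n$, hence $x^{-1}\equiv1\pmod{mn}$; the $-$ sign can occur only when $m=2$, and then forces $x^{-1}\equiv-1\pmod{mn}$. In either case $x\equiv\pm1\pmod{mn}$, so $\sigma$ is trivial, and therefore $K_{(m)}(h_{m,n}(\theta_K))=K_{(mn)}$.

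\noindent For the last assertion, I would use that, when $m$ has at least two prime factors, $h_{m,n}$ is a modular unit over $\mathbb{Z}$ by Proposition \ref{h}, so that $h_{m,n}$ and $h_{m,n}^{-1}$ both lie in $\mathcal{R}_{mn}$, the integral closure of $\mathbb{Z}[j(\tau)]$ in $\mathcal{F}_{mn}$, and hence each satisfies a monic equation over $\mathbb{Z}[j(\tau)]$. Specializing these equations at $\tau=\theta_K$ (legitimate since the functions involved are finite at $\theta_K$) and invoking Lemma \ref{jintegral}, so that $j(\theta_K)$ and hence every element of $\mathbb{Z}[j(\theta_K)]$ is an algebraic integer, we see that $h_{m,n}(\theta_K)$ and its inverse are algebraic integers; as both lie in $K_{(mn)}$, it follows that $h_{m,n}(\theta_K)\in\mathcal{O}_{K_{(mn)}}^*$.

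\noindent The step I expect to require the most care is the reciprocity bookkeeping: one must reconcile the transpose in Lemma \ref{Fricketransf} with Stevenhagen's formulation so that the index vector $\left[\begin{smallmatrix}r\\s\end{smallmatrix}\right]$ is acted on by multiplication by $x^{\mp1}$ itself, not by the transpose of the multiplication matrix (with the transpose one would instead send $\left[\begin{smallmatrix}0\\1/mn\end{smallmatrix}\right]$ to $\left[\begin{smallmatrix}-s'C_K/n\\1/mn+t'/n\end{smallmatrix}\right]$, and the argument would break down whenever $\gcd(C_K,n)>1$); and one must pass to $K_{(1)}$, rather than $K$, so that no ideal-class factor enters Stevenhagen's description of the Galois action. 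Granting these two points, the concluding case analysis and the integrality argument are entirely routine.
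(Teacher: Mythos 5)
Your proof is correct, but the generation half takes a genuinely different route from the paper's. The paper gets $K_{(mn)}=K_{(m)}(h_{m,n}(\theta_K))$ in three lines: by Lemma \ref{cm}(ii) the single Fricke value $f_{\left[\begin{smallmatrix}0\\1/mn\end{smallmatrix}\right]}(\theta_K)$ already generates $K_{(mn)}$ over $K_{(1)}$, hence over $K_{(m)}$, and since $f_{\left[\begin{smallmatrix}1/m\\0\end{smallmatrix}\right]}(\theta_K),f_{\left[\begin{smallmatrix}0\\1/m\end{smallmatrix}\right]}(\theta_K)\in K_{(m)}$ by Lemma \ref{cm}(i), adjoining $h_{m,n}(\theta_K)$ via (\ref{hf}) (with the denominator nonzero) is the same as adjoining $f_{\left[\begin{smallmatrix}0\\1/mn\end{smallmatrix}\right]}(\theta_K)$. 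You instead show the stabilizer of $h_{m,n}(\theta_K)$ in $\mathrm{Gal}(K_{(mn)}/K_{(m)})$ is trivial by an explicit reciprocity computation, and your bookkeeping is right: composing Lemma \ref{Shimura}(i) with Lemma \ref{Fricketransf} makes ${}^t\alpha$ act on the index vectors, and for $\alpha\in W_{K,mn}$ parametrized by $t,s$ this transpose \emph{is} the multiplication-by-$(t+s\theta_K)$ matrix, so the index vector is moved by multiplication by a unit of $\mathcal{O}_K/mn\mathcal{O}_K$ congruent to $\pm1$ modulo $m$, exactly as you need; whether it is $x$ or $x^{-1}$ is immaterial, as you note, and your case analysis ($+$ sign, and the $-$ sign forcing $m=2$ and $x\equiv-1\pmod{mn}$, hence $\pm I_2$) is sound. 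Your route is heavier — it redoes, for general non-scalar elements of $W_{K,mn}$, the kind of computation the paper reserves for scalar matrices in Theorem \ref{main} — but it needs only Lemma \ref{cm}(i) rather than the stronger input \ref{cm}(ii), and it makes explicit the nonvanishing of the denominator and of $g_2(\theta_K)g_3(\theta_K)$ (via $K\neq\mathbb{Q}(\sqrt{-1}),\mathbb{Q}(\sqrt{-3})$), which the paper leaves implicit. The unit half of your argument (modular unit over $\mathbb{Z}$, so $h_{m,n}^{\pm1}$ integral over $\mathbb{Z}[j(\tau)]$, then specialize at $\theta_K$ and invoke Lemma \ref{jintegral}) is exactly the paper's.
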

\begin{proof}
We first derive that
\begin{eqnarray*}
K_{(mn)}&=&K_{(1)}(f_{\left[\begin{smallmatrix}0\\1/mn\end{smallmatrix}\right]}(\theta_K))
\quad\textrm{by Lemma \ref{cm}(ii)}\\
&=&K_{(m)}\bigg(\frac{f_{\left[\begin{smallmatrix}0\\1/mn\end{smallmatrix}\right]}(\theta_K)
  -f_{\left[\begin{smallmatrix}1/m\\0\end{smallmatrix}\right]}(\theta_K)}
  {f_{\left[\begin{smallmatrix}0\\1/m\end{smallmatrix}\right]}(\theta_K)
  -f_{\left[\begin{smallmatrix}1/m\\0\end{smallmatrix}\right]}(\theta_K)}\bigg)\\
  &&\textrm{because}~f_{\left[\begin{smallmatrix}1/m\\0\end{smallmatrix}\right]}(\theta_K),
  f_{\left[\begin{smallmatrix}0\\1/m\end{smallmatrix}\right]}(\theta_K)\in K_{(m)}
  ~\textrm{by Lemma \ref{cm}(i)}\\
  &=&K_{(m)}(h_{m,n}(\theta_K))\quad\textrm{by (\ref{hf})}.
\end{eqnarray*}
And, if $m$ has at least two prime factors, then $h_{m,n}(\tau)$ is a modular unit
over $\mathbb{Z}$ by Proposition \ref{h}; hence $h_{m,n}(\tau)$ and $h_{m,n}(\tau)^{-1}$ are both integral over $\mathbb{Z}[j(\tau)]$. Therefore we conclude by Lemma \ref{jintegral} that $h_{m,n}(\theta_K)$ is a unit as an algebraic integer.
\end{proof}

\begin{lemma}[Shimura's reciprocity law]\label{Shimura}
Let $N$ be a positive integer and let $\mathcal{O}$ be the
order of conductor $N$ of $K$. Consider the matrix group
\begin{equation*}
W_{K,N}=\bigg\{\left[\begin{matrix}
t-B_Ks & -C_Ks\\s&t
\end{matrix}\right]\in\mathrm{GL}_2(\mathbb{Z}/N\mathbb{Z})~
|~t,s\in\mathbb{Z}/N\mathbb{Z}\bigg\},
\end{equation*}
where
\begin{equation*}
\min(\theta_K,\mathbb{Q})=X^2+B_KX+C_K=
X^2-d_KX+\frac{d_K^2-d_K}{4}.
\end{equation*}
\begin{itemize}
\item[\textup{(i)}]
We have the isomorphism
\begin{eqnarray*}
W_{K,N}/\{\pm I_2\}&\longrightarrow&\mathrm{Gal}(K_{(N)}/K_{(1)})\\
\alpha&\mapsto&(f(\theta_K)\mapsto f^\alpha(\theta_K)~|~f(\tau)\in\mathcal{F}_N~
\textrm{is finite at}~\theta_K).
\end{eqnarray*}
\item[\textup{(ii)}]
The above induces the isomorphism
\begin{equation*}
\{tI_2\in\mathrm{GL}_2(\mathbb{Z}/N\mathbb{Z})~|~t\in(\mathbb{Z}/N\mathbb{Z})^*\}/\{\pm I_2\}\longrightarrow\mathrm{Gal}(K_{(N)}/H_\mathcal{O}).
\end{equation*}
\item[\textup{(iii)}] If $M$ is a divisor of $N$, then we get the isomorphism
\begin{equation*}
\{tI_2\in\mathrm{GL}_2(\mathbb{Z}/N\mathbb{Z})~|~t\in(\mathbb{Z}/N\mathbb{Z})^*
~\textrm{with}~t\equiv\pm1\pmod{M}\}/\{\pm I_2\}\longrightarrow\mathrm{Gal}(K_{(N)}/K_{(M)}H_\mathcal{O}).
\end{equation*}
\end{itemize}
\end{lemma}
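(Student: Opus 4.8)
The plan is to deduce all three parts from the idelic form of Shimura's reciprocity law --- as it occurs in \cite[Chapter 6]{Shimura} and, in explicit shape, in \cite{Stevenhagen} --- together with class field theory over $K$. Write $\min(\theta_K,\mathbb{Q})=X^2+B_KX+C_K$, and let $q\colon K\hookrightarrow M_2(\mathbb{Q})$ send $\beta$ to the transpose of the matrix, in the ordered basis $(\theta_K,1)$, of multiplication by $\beta$ on $K$; using $\theta_K^2=-B_K\theta_K-C_K$ one computes
\begin{equation*}
q(t+s\theta_K)=\left[\begin{matrix}t-B_Ks&-C_Ks\\s&t\end{matrix}\right],\qquad
\det q(t+s\theta_K)=N_{K/\mathbb{Q}}(t+s\theta_K),
\end{equation*}
the transpose being inserted so as to match Lemma \ref{Fricketransf}. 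Reducing modulo $N$, the map $\alpha\mapsto q(\alpha)$ identifies $(\mathcal{O}_K/N\mathcal{O}_K)^*$ with $W_{K,N}$: it is injective, it is onto $W_{K,N}$ by the way $W_{K,N}$ is parametrized, and $q(\alpha)$ lies in $\mathrm{GL}_2(\mathbb{Z}/N\mathbb{Z})$ exactly when $N_{K/\mathbb{Q}}(\alpha)\in(\mathbb{Z}/N\mathbb{Z})^*$, i.e.\ when $\alpha\in(\mathcal{O}_K/N\mathcal{O}_K)^*$. Since $K\neq\mathbb{Q}(\sqrt{-1}),\mathbb{Q}(\sqrt{-3})$ we have $\mathcal{O}_K^*=\{\pm1\}$, which $q$ carries into $\{\pm I_2\}$.

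For part (i), the exact sequence $\mathcal{O}_K^*\to(\mathcal{O}_K/N\mathcal{O}_K)^*\to\mathrm{Cl}_{(N)}\to\mathrm{Cl}_K\to1$ of class field theory ($\mathrm{Cl}_{(N)}$ being the ray class group of $K$ modulo $(N)$) shows that $\mathrm{Gal}(K_{(N)}/K_{(1)})=\ker(\mathrm{Cl}_{(N)}\to\mathrm{Cl}_K)\cong(\mathcal{O}_K/N\mathcal{O}_K)^*/\{\pm1\}\cong W_{K,N}/\{\pm I_2\}$. That this isomorphism is the one displayed in the statement is the substance of Shimura's reciprocity law: for an idele $x\in\widehat{\mathcal{O}}_K^*$ whose class modulo $N$ is $\alpha$, the associated Artin automorphism $\sigma_x$ fixes $K_{(1)}$ and satisfies $f(\theta_K)^{\sigma_x}=f^{q(\alpha)}(\theta_K)$ for every $f\in\mathcal{F}_N$ finite at $\theta_K$ (so in particular $f^{q(\alpha)}$ is again such a function), and $x\mapsto\alpha$ is surjective onto $(\mathcal{O}_K/N\mathcal{O}_K)^*$; by Lemma \ref{cm}(i) the values $f(\theta_K)$ generate $K_{(N)}$, so $\sigma_x$ is pinned down by this formula.

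Parts (ii) and (iii) are then class field theory inside the abelian group $\mathrm{Gal}(K_{(N)}/K_{(1)})$. Since $(1)\mid(M)\mid(N)$, one has $K_{(1)}\subseteq K_{(M)}\subseteq K_{(N)}$, while the ring class group $\mathrm{Cl}(\mathcal{O})$ of the order $\mathcal{O}$ of conductor $N$ is a quotient of $\mathrm{Cl}_{(N)}$ and itself surjects onto $\mathrm{Cl}_K$, so that $K_{(1)}=H_{\mathcal{O}_K}\subseteq H_\mathcal{O}\subseteq K_{(N)}$. Moreover $\mathrm{Gal}(K_{(N)}/H_\mathcal{O})=\ker(\mathrm{Cl}_{(N)}\to\mathrm{Cl}(\mathcal{O}))$, which under the identification of part (i) is the image of $(\mathbb{Z}/N\mathbb{Z})^*$ --- the classes of rational integers --- inside $(\mathcal{O}_K/N\mathcal{O}_K)^*/\{\pm1\}$, i.e.\ the subgroup of scalar matrices $tI_2$ with $t\in(\mathbb{Z}/N\mathbb{Z})^*$; this is (ii). For (iii), $\mathrm{Gal}(K_{(N)}/K_{(M)}H_\mathcal{O})=\mathrm{Gal}(K_{(N)}/K_{(M)})\cap\mathrm{Gal}(K_{(N)}/H_\mathcal{O})$; using $\mathcal{O}_K^*=\{\pm1\}$ once more, the first factor equals $\{\overline\alpha:\alpha\in(\mathcal{O}_K/N\mathcal{O}_K)^*,\ \alpha\equiv\pm1\pmod{M\mathcal{O}_K}\}$, so intersecting with the scalars leaves precisely the $tI_2$ with $t\in(\mathbb{Z}/N\mathbb{Z})^*$ and $t\equiv\pm1\pmod M$ --- the congruence now being among rational integers --- as asserted.

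The step I expect to be the real obstacle is part (i): one must pin down the normalizations in Shimura's reciprocity law --- the precise form of $q$ (regular representation versus its transpose, and whether an inverse enters) and the way $\widehat{\mathcal{O}}_K^*$ maps to $\mathrm{GL}_2(\mathbb{Z}/N\mathbb{Z})$ --- so that the Galois action on the values $f(\theta_K)$ is exactly $f\mapsto f^{q(\alpha)}$ with $q$ as above, and one must verify that descending from ideles to level $N$ contributes nothing beyond the factor $\mathcal{O}_K^*=\{\pm1\}$, which is precisely where the exclusion of $\mathbb{Q}(\sqrt{-1})$ and $\mathbb{Q}(\sqrt{-3})$ is used. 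Granting part (i), parts (ii) and (iii) are purely formal.
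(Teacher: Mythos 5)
Your argument is correct, and it is essentially a fleshed-out version of what the paper does by citation: the paper's entire proof is to refer to Stevenhagen [\S 3] for (i), to Koo--Shin [Proposition 5.3] for (ii), and to note that (iii) follows from (i) and (ii). For the crux of (i) --- the precise normalization making the Galois action on values $f(\theta_K)$ equal to $f\mapsto f^{q(\alpha)}$ with $q$ the transposed regular representation --- you defer to exactly the same sources, so there the two proofs coincide; your added value is the explicit identification of $W_{K,N}$ with $(\mathcal{O}_K/N\mathcal{O}_K)^*$ via $q$ and the exact sequence $\mathcal{O}_K^*\to(\mathcal{O}_K/N\mathcal{O}_K)^*\to\mathrm{Cl}_{(N)}\to\mathrm{Cl}_K\to1$, which uses $\mathcal{O}_K^*=\{\pm1\}$ just as the exclusion of $\mathbb{Q}(\sqrt{-1})$, $\mathbb{Q}(\sqrt{-3})$ intends. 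Where you genuinely diverge is (ii): instead of invoking the cited proposition, you derive it from the standard description of the ring class group as $I_K(N)/P_{K,\mathbb{Z}}(N)$, so that $\ker(\mathrm{Cl}_{(N)}\to\mathrm{Cl}(\mathcal{O}))$ is the image of the rational residues $(\mathbb{Z}/N\mathbb{Z})^*$, i.e.\ the scalar matrices $tI_2$; this is a legitimate self-contained route (it is the classical ring-class-field computation), though you should be aware that this identification is precisely the content of the reference the paper uses, not something the paper reproves. Your treatment of (iii) matches the paper's ``direct consequence'' claim, with the needed compatibilities spelled out: restriction to $K_{(M)}$ corresponds to reduction modulo $M$, so $\mathrm{Gal}(K_{(N)}/K_{(M)})$ is the classes with $\alpha\equiv\pm1\pmod{M\mathcal{O}_K}$, and for rational $t$ this congruence is equivalent to $t\equiv\pm1\pmod{M}$ since $M\mathcal{O}_K\cap\mathbb{Z}=M\mathbb{Z}$. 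One small remark: the possible ``inverse'' ambiguity in the Artin normalization that you flag as the main obstacle is harmless for (ii) and (iii), since the relevant subgroups are stable under inversion, and it is resolved for (i) by the explicit form in Stevenhagen that both you and the paper rely on.
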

\begin{proof}
\begin{itemize}
\item[(i)] See \cite[$\S$3]{Stevenhagen}.
\item[(ii)] See \cite[Proposition 5.3]{K-S}.
\item[(iii)] This is a direct consequence of (i) and (ii).
\end{itemize}
\end{proof}

\begin{lemma}\label{invariant}
Let $N$ \textup{(}$\geq2$\textup{)} be a positive integer
for which $(N)=N\mathcal{O}_K$ is not a power of a prime ideal.
\begin{itemize}
\item[\textup{(i)}] $g_{\left[\begin{smallmatrix}0\\1/N
\end{smallmatrix}\right]}(\theta_K)^{12N}$ is a unit of $\mathcal{O}_{K_{(N)}}$.
\item[\textup{(ii)}] If $u$ is an integer prime to $N$, then
$g_{\left[\begin{smallmatrix}0\\u/N
\end{smallmatrix}\right]}(\theta_K)^{12N}$ is also a unit of $\mathcal{O}_{K_{(N)}}$.
\end{itemize}
\end{lemma}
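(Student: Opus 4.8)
The plan is to deduce part (ii) from part (i) by Galois conjugacy, and to prove (i) by treating separately the cases where $N$ has one or at least two distinct rational prime factors. First note that $g_{\left[\begin{smallmatrix}0\\1/N\end{smallmatrix}\right]}(\tau)^{12N}$ is a modular unit of level $N$ by Lemma \ref{Siegellemma}(i), hence has neither zeros nor poles on $\mathbb{H}$; thus $g_{\left[\begin{smallmatrix}0\\1/N\end{smallmatrix}\right]}(\theta_K)^{12N}$ is a well-defined nonzero element of $K_{(N)}$ by Lemma \ref{cm}(i). For (ii), given $u$ prime to $N$, Lemma \ref{Shimura}(ii) says that $uI_2$ represents an element of $\mathrm{Gal}(K_{(N)}/H_\mathcal{O})$, and the transformation formula of Lemma \ref{Siegellemma}(ii) shows it carries $g_{\left[\begin{smallmatrix}0\\1/N\end{smallmatrix}\right]}(\theta_K)^{12N}$ to $g_{\,{}^t(uI_2)\left[\begin{smallmatrix}0\\1/N\end{smallmatrix}\right]}(\theta_K)^{12N}=g_{\left[\begin{smallmatrix}0\\u/N\end{smallmatrix}\right]}(\theta_K)^{12N}$. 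Since a Galois conjugate of a unit is a unit, (ii) follows once (i) is known.

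To prove (i), suppose first that $N$ has at least two distinct prime factors. Then the primitive denominator of $\left[\begin{smallmatrix}0\\1/N\end{smallmatrix}\right]$ is $N$, so $g_{\left[\begin{smallmatrix}0\\1/N\end{smallmatrix}\right]}(\tau)$ is a modular unit over $\mathbb{Z}$ by Lemma \ref{Siegellemma}(iii); that is, it and its inverse are integral over $\mathbb{Z}[j(\tau)]$. Specializing at $\tau=\theta_K$ and using that $j(\theta_K)$ is an algebraic integer (Lemma \ref{jintegral}), we conclude that $g_{\left[\begin{smallmatrix}0\\1/N\end{smallmatrix}\right]}(\theta_K)$, and therefore its $12N$-th power, is a unit of $\mathcal{O}_{K_{(N)}}$.

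There remains the case $N=p^e$ for a rational prime $p$. Under the hypothesis that $(N)$ is not a power of a prime ideal, $p$ must split in $K/\mathbb{Q}$ (an inert $p$ gives $(N)=(p)^e$, a ramified one $(N)=\mathfrak{p}^{2e}$), say $p\mathcal{O}_K=\mathfrak{p}\overline{\mathfrak{p}}$. Here $g_{\left[\begin{smallmatrix}0\\1/N\end{smallmatrix}\right]}(\tau)^{12N}$ need not be a modular unit over $\mathbb{Z}$, but it is still integral over $\mathbb{Z}[j(\tau)]$ — only its inverse can fail to be, and then only above $p$ (cf.\ Lemma \ref{Siegellemma}(i) and \cite[Chapter 2]{K-L}) — so $g_{\left[\begin{smallmatrix}0\\1/N\end{smallmatrix}\right]}(\theta_K)^{12N}$ is an algebraic integer, and it suffices to prove that its norm down to $H_\mathcal{O}$ is a unit. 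By the Galois description used above, that norm equals $\prod_{t}g_{\left[\begin{smallmatrix}0\\t/N\end{smallmatrix}\right]}(\theta_K)^{12N}$ with $t$ running over $(\mathbb{Z}/N\mathbb{Z})^{*}/\{\pm1\}$. Now $g_{\left[\begin{smallmatrix}0\\t/N\end{smallmatrix}\right]}(\tau)=g_{\left[\begin{smallmatrix}0\\(N-t)/N\end{smallmatrix}\right]}(\tau)$, and one has the telescoping identity
\begin{equation*}
\prod_{\substack{1\le t\le N-1\\(t,N)=1}}g_{\left[\begin{smallmatrix}0\\t/N\end{smallmatrix}\right]}(\tau)
= i^{\,p^{e}-p^{e-1}}\,p\left(\frac{\eta(p^{e}\tau)}{\eta(p^{e-1}\tau)}\right)^{2},
\end{equation*}
obtained by dividing the cases $M=p^{e}$ and $M=p^{e-1}$ of $\prod_{t=1}^{M-1}g_{\left[\begin{smallmatrix}0\\t/M\end{smallmatrix}\right]}(\tau)=i^{M-1}M\bigl(\eta(M\tau)/\eta(\tau)\bigr)^{2}$, which itself follows from $\prod_{t=1}^{M-1}2\sin(\pi t/M)=M$ and $\eta(\tau)^{24}=\Delta(\tau)$. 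Combining these and using $\eta^{24}=\Delta$ once more, a short computation shows the norm is a unit if and only if $p^{12}\,\Delta(p^{e}\theta_K)/\Delta(p^{e-1}\theta_K)$ is a unit. Since $p^{e}\theta_K$ and $p^{e-1}\theta_K$ represent the orders of conductor $p^{e}$ and $p^{e-1}$, whose lattices $[p^{e}\theta_K,1]\subset[p^{e-1}\theta_K,1]$ lie one inside the other with index $p$, this amounts to saying that the discriminant ratio under the degree-$p$ isogeny $\mathbb{C}/[p^{e}\theta_K,1]\to\mathbb{C}/[p^{e-1}\theta_K,1]$ of CM curves has $\mathfrak{P}$-valuation exactly $-12\,v_{\mathfrak{P}}(p)$ at each prime $\mathfrak{P}$ of $H_\mathcal{O}$ above $p$; this is precisely where the splitting of $p$ is used, and it follows from the theory of complex multiplication for non-maximal orders (or can be quoted from \cite{K-L}).

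The main obstacle is this last step: determining the exact power of $\mathfrak{p}$ (and of $\overline{\mathfrak{p}}$) in $\Delta(p^{e}\theta_K)/\Delta(p^{e-1}\theta_K)$ when $p$ splits. The remaining ingredients — the passage from (ii) to (i), the two-prime-factor case, and the telescoping norm computation — are formal and rely only on the lemmas already established together with Shimura's reciprocity law.
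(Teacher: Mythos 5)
Your deduction of (ii) from (i) via Shimura's reciprocity law together with Lemma \ref{Siegellemma}(ii) is exactly the paper's argument, and your treatment of (i) when $N$ has at least two rational prime factors (Lemma \ref{Siegellemma}(iii) plus integrality of $j(\theta_K)$, Lemma \ref{jintegral}) is correct and in the spirit of Proposition \ref{generate}. The gap is in the remaining case $N=p^{e}$ with $p$ split, which is the only case in which the hypothesis that $(N)$ is not a power of a prime ideal has real content. Your reduction there is formally sound: the telescoped identity $\prod_{\substack{1\le t\le N-1\\ (t,N)=1}}g_{\left[\begin{smallmatrix}0\\t/N\end{smallmatrix}\right]}(\tau)=i^{\,p^{e}-p^{e-1}}p\bigl(\eta(p^{e}\tau)/\eta(p^{e-1}\tau)\bigr)^{2}$ is true, the norm over $\mathrm{Gal}(K_{(N)}/H_\mathcal{O})$ is what you say it is, and a nonzero algebraic integer with unit norm is a unit. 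But all of this merely converts (i) into the assertion that $p^{12}\Delta(p^{e}\theta_K)/\Delta(p^{e-1}\theta_K)$ is a unit, i.e.\ that this $\Delta$-quotient has valuation exactly $-12\,v_{\mathfrak{P}}(p)$ at every prime $\mathfrak{P}$ above $p$ and valuation $0$ elsewhere. Since $g_{\left[\begin{smallmatrix}0\\1/N\end{smallmatrix}\right]}(\theta_K)^{12N}$ is an algebraic integer, that assertion is in fact \emph{equivalent} to statement (i); asserting that it ``follows from the theory of complex multiplication for non-maximal orders (or can be quoted from [K-L])'' places the entire arithmetic difficulty in an unproved sentence. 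Determining which vertical $p$-isogeny reduces inseparably at which prime above $p$ (Deuring reduction theory, or equivalently the explicit valuations of singular values of $\eta$-quotients) is a genuine computation, and it is precisely the content of the Ramachandra/Kubert--Lang results on singular Siegel values.

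For comparison, the paper proves (i) purely by citation --- to [J-K-S, Remark 4.3], Ramachandra, and Lang, p.~293 --- so if quoting Kubert--Lang at the level you invoke it is permitted, you could simply quote the unit theorem for Siegel--Ramachandra invariants of non-prime-power conductor directly, which is what the paper does; as a self-contained proof, your write-up stops exactly at its acknowledged ``main obstacle,'' so the split prime-power case remains unestablished.
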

\begin{proof}
\begin{itemize}
\item[(i)] See \cite[Remark 4.3]{J-K-S} and \cite{Ramachandra} (or, \cite[p.293]{Lang}).
\item[(ii)]
We obtain
\begin{eqnarray*}
  g_{\left[\begin{smallmatrix}0\\u/N
\end{smallmatrix}\right]}(\theta_K)^{12N}&=&
g_{{}^t(uI_2)\left[\begin{smallmatrix}0\\1/N
\end{smallmatrix}\right]}(\theta_K)^{12N}\\
&=&(g_{\left[\begin{smallmatrix}0\\1/N
\end{smallmatrix}\right]}(\tau)^{12N})^{uI_2}(\theta_K)\quad
\textrm{by Lemma \ref{Siegellemma}(i) and (ii)}\\
&=&(g_{\left[\begin{smallmatrix}0\\1/N
\end{smallmatrix}\right]}(\theta_K)^{12N})^{uI_2}\quad
\textrm{by Lemmas \ref{cm}(i) and \ref{Shimura}(i)}.
\end{eqnarray*}
Now, the result follows from (i).
\end{itemize}
\end{proof}

\begin{remark}
Here the singular value $g_{\left[\begin{smallmatrix}0\\1/N
\end{smallmatrix}\right]}(\theta_K)^{12N}$
is called a \textit{Siegel-Ramachandra invariant} modulo $(N)$, and it forms a normal basis for $K_{(N)}/K$ \cite{J-K-S}.
\end{remark}

\section {Construction of relative power integral bases}

We are ready to prove our main theorem concerning relative power integral bases.

\begin{theorem}\label{main}
Let $K$ be an imaginary quadratic field other than $\mathbb{Q}(\sqrt{-1})$ and
$\mathbb{Q}(\sqrt{-3})$.
Assume that $m$ \textup{(}$\geq2$\textup{)} and $n$ are positive integers such that
\begin{itemize}
\item[\textup{(i)}] $m$ has at least two prime factors,
\item[\textup{(ii)}] each prime factor of $mn$ splits in $K/\mathbb{Q}$.
\end{itemize}
If $L=K_{(mn)}$ and
$F=K_{(m)}H_\mathcal{O}$ with $\mathcal{O}$
the order of conductor $mn$ of $K$, then
 $h_{m,n}(\theta_K)$ forms a relative power integral basis for $L/F$.
\end{theorem}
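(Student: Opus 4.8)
My plan is to reduce the statement to the claim that every difference $h^{\sigma}-h^{\tau}$ of Galois conjugates of $h:=h_{m,n}(\theta_K)$ (with $\sigma\neq\tau$ in $G:=\mathrm{Gal}(L/F)$) is a unit of $\mathcal{O}_L$, and then to verify this by rewriting such a difference in terms of Siegel functions via Lemma \ref{wtog} and invoking Lemmas \ref{Siegellemma} and \ref{invariant}. For the reduction: by Proposition \ref{generate}, applied together with $K_{(m)}\subseteq F$, we already know that $L=F(h)$ and that $h\in\mathcal{O}_L^{*}$; consequently $\mathcal{O}_F[h]=\bigoplus_{i=0}^{[L:F]-1}\mathcal{O}_F h^{i}$ is a free $\mathcal{O}_F$-module of rank $[L:F]$ contained in $\mathcal{O}_L$, and $h^{\sigma}-h^{\tau}\in\mathcal{O}_L$ for all $\sigma,\tau\in G$. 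Suppose we have shown $h^{\sigma}-h^{\tau}\in\mathcal{O}_L^{*}$ whenever $\sigma\neq\tau$. Then $\mathrm{disc}_{L/F}(h)=\pm\prod_{\sigma\neq\tau}(h^{\sigma}-h^{\tau})\in\mathcal{O}_F^{*}$, so the pairing $\mathrm{Tr}_{L/F}$ is unimodular on $\mathcal{O}_F[h]$, i.e.\ $\mathcal{O}_F[h]^{\vee}=\mathcal{O}_F[h]$ for the $\mathrm{Tr}_{L/F}$-dual; since also $\mathrm{Tr}_{L/F}(\mathcal{O}_L\mathcal{O}_L)\subseteq\mathcal{O}_F$, the chain
\[
\mathcal{O}_L\ \subseteq\ \mathcal{O}_L^{\vee}\ \subseteq\ \mathcal{O}_F[h]^{\vee}\ =\ \mathcal{O}_F[h]\ \subseteq\ \mathcal{O}_L
\]
collapses and $\mathcal{O}_L=\mathcal{O}_F[h]$. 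So it is enough to prove $h^{\sigma}-h^{\tau}\in\mathcal{O}_L^{*}$ for $\sigma\neq\tau$.

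To compute the conjugates I would use Lemma \ref{Shimura}(iii) with $N=mn$ and $M=m$, which identifies $G$ with $\{\,tI_2:t\in(\mathbb{Z}/mn\mathbb{Z})^{*},\ t\equiv\pm1\pmod m\,\}/\{\pm I_2\}$, the matrix $tI_2$ acting through $\mathrm{Gal}(\mathcal{F}_{mn}/\mathcal{F}_1)$. Fix $\sigma=tI_2\neq\tau=t'I_2$, so $t\not\equiv\pm t'\pmod{mn}$. Since $f_{\left[\begin{smallmatrix}r\\ s\end{smallmatrix}\right]}$ depends only on $\pm\left[\begin{smallmatrix}r\\ s\end{smallmatrix}\right]\pmod{\mathbb{Z}^2}$, we have $f_{\left[\begin{smallmatrix}t/m\\ 0\end{smallmatrix}\right]}=f_{\left[\begin{smallmatrix}1/m\\ 0\end{smallmatrix}\right]}$ and $f_{\left[\begin{smallmatrix}0\\ t/m\end{smallmatrix}\right]}=f_{\left[\begin{smallmatrix}0\\ 1/m\end{smallmatrix}\right]}$ when $t\equiv\pm1\pmod m$; combining this with \eqref{hf}, Lemma \ref{Fricketransf}, Lemma \ref{cm}(i), and the relation $f_{\left[\begin{smallmatrix}r\\ s\end{smallmatrix}\right]}=(g_2g_3/\Delta)\,\wp_{\left[\begin{smallmatrix}r\\ s\end{smallmatrix}\right]}$ of \eqref{Fricke}, one finds after cancellation (in particular of $\wp_{\left[\begin{smallmatrix}1/m\\ 0\end{smallmatrix}\right]}(\theta_K)$ in the numerator) that
\[
h^{\sigma}-h^{\tau}=\frac{\wp_{\left[\begin{smallmatrix}0\\ t/mn\end{smallmatrix}\right]}(\theta_K)-\wp_{\left[\begin{smallmatrix}0\\ t'/mn\end{smallmatrix}\right]}(\theta_K)}{\wp_{\left[\begin{smallmatrix}0\\ 1/m\end{smallmatrix}\right]}(\theta_K)-\wp_{\left[\begin{smallmatrix}1/m\\ 0\end{smallmatrix}\right]}(\theta_K)}.
\]

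Next I would apply Lemma \ref{wtog} to the numerator (with $a=0$, $b=t/mn$, $c=0$, $d=t'/mn$) and to the denominator (with $a=0$, $b=1/m$, $c=1/m$, $d=0$); the factors $\eta(\tau)^{4}$ and all signs then cancel, and
\[
h^{\sigma}-h^{\tau}=\frac{g_{\left[\begin{smallmatrix}0\\ (t+t')/mn\end{smallmatrix}\right]}(\theta_K)\,g_{\left[\begin{smallmatrix}0\\ (t-t')/mn\end{smallmatrix}\right]}(\theta_K)\;g_{\left[\begin{smallmatrix}0\\ 1/m\end{smallmatrix}\right]}(\theta_K)^{2}\,g_{\left[\begin{smallmatrix}1/m\\ 0\end{smallmatrix}\right]}(\theta_K)^{2}}{g_{\left[\begin{smallmatrix}0\\ t/mn\end{smallmatrix}\right]}(\theta_K)^{2}\,g_{\left[\begin{smallmatrix}0\\ t'/mn\end{smallmatrix}\right]}(\theta_K)^{2}\;g_{\left[\begin{smallmatrix}1/m\\ 1/m\end{smallmatrix}\right]}(\theta_K)\,g_{\left[\begin{smallmatrix}-1/m\\ 1/m\end{smallmatrix}\right]}(\theta_K)}.
\]
Now $\left[\begin{smallmatrix}0\\ 1/m\end{smallmatrix}\right]$, $\left[\begin{smallmatrix}1/m\\ 0\end{smallmatrix}\right]$, $\left[\begin{smallmatrix}1/m\\ 1/m\end{smallmatrix}\right]$, $\left[\begin{smallmatrix}-1/m\\ 1/m\end{smallmatrix}\right]$ have primitive denominator $m$, and $\left[\begin{smallmatrix}0\\ t/mn\end{smallmatrix}\right]$, $\left[\begin{smallmatrix}0\\ t'/mn\end{smallmatrix}\right]$ have primitive denominator $mn$ (as $\gcd(t,mn)=\gcd(t',mn)=1$); since $m$, hence $mn$, has at least two prime factors by hypothesis (i), Lemma \ref{Siegellemma}(iii) together with Lemma \ref{jintegral} shows that the values of these Siegel functions at $\theta_K$ are units of $\mathcal{O}_L$. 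Hence $h^{\sigma}-h^{\tau}$ equals a unit of $\mathcal{O}_L$ times $g_{\left[\begin{smallmatrix}0\\ (t+t')/mn\end{smallmatrix}\right]}(\theta_K)\,g_{\left[\begin{smallmatrix}0\\ (t-t')/mn\end{smallmatrix}\right]}(\theta_K)$. For $w\in\{t+t',\,t-t'\}$ put $M_w=mn/\gcd(w,mn)$, the primitive denominator of $\left[\begin{smallmatrix}0\\ w/mn\end{smallmatrix}\right]$; since $\sigma\neq\tau$ forces $t\pm t'\not\equiv0\pmod{mn}$, we have $M_w\geq2$. If $M_w$ has at least two prime factors, $g_{\left[\begin{smallmatrix}0\\ w/mn\end{smallmatrix}\right]}(\theta_K)$ is again a unit. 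If $M_w=p^{a}$ is a prime power, then $p\mid mn$ splits in $K/\mathbb{Q}$ by hypothesis (ii), so $(p^{a})=\mathfrak{p}^{a}\overline{\mathfrak{p}}^{a}$ is not a power of a prime ideal, and Lemma \ref{invariant}(ii) gives $g_{\left[\begin{smallmatrix}0\\ w/mn\end{smallmatrix}\right]}(\theta_K)^{12p^{a}}\in\mathcal{O}_L^{*}$. In either case some fixed positive power of $g_{\left[\begin{smallmatrix}0\\ (t+t')/mn\end{smallmatrix}\right]}(\theta_K)\,g_{\left[\begin{smallmatrix}0\\ (t-t')/mn\end{smallmatrix}\right]}(\theta_K)$ is a unit of $\mathcal{O}_L$; this product lies in $\mathcal{O}_L$ (being a unit multiple of $h^{\sigma}-h^{\tau}$), and an algebraic integer one of whose powers is a unit is itself a unit, so $h^{\sigma}-h^{\tau}\in\mathcal{O}_L^{*}$. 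With the reduction above this proves the theorem.

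I expect the main obstacle to be this last step, and specifically the point that the residual Siegel values $g_{\left[\begin{smallmatrix}0\\ (t\pm t')/mn\end{smallmatrix}\right]}(\theta_K)$ are genuine units, not merely generators of prime ideals. This is exactly where both hypotheses are needed: (i) makes every auxiliary Siegel value a unit over $\mathbb{Z}$, while (ii) is what guarantees—via Lemma \ref{invariant}—that a Siegel–Ramachandra invariant attached to a prime power of a \emph{split} prime is a unit rather than a prime‑ideal generator. One must also be careful with the congruence bookkeeping: the fact that $\sigma\neq\tau$ forces $t\pm t'\not\equiv0\pmod{mn}$ is what ensures that the symbols occurring are non‑integral with primitive denominator at least $2$, so that Lemmas \ref{Siegellemma} and \ref{invariant} genuinely apply, and hypothesis (ii) is precisely what rules out a residual symbol whose primitive denominator is a prime power of a non‑split prime, which would break the unit property.
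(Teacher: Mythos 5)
Your proposal is correct and follows essentially the same route as the paper: reduce the statement to showing that the discriminant $\pm\prod_{\sigma\neq\tau}(h^{\sigma}-h^{\tau})$ is a unit (you phrase this via the trace-dual lattice, the paper via Cramer's rule on the trace system), then identify $\mathrm{Gal}(L/F)$ with scalars $tI_2$ by Lemma \ref{Shimura}(iii), express each difference of conjugates through \eqref{hf}, Lemma \ref{Fricketransf} and Lemma \ref{wtog} as a ratio of Siegel values, and conclude unit-ness from Lemmas \ref{Siegellemma}(iii), \ref{jintegral} and \ref{invariant}(ii) using hypotheses (i) and (ii). The only cosmetic differences are that you treat arbitrary pairs $(\sigma,\tau)$ instead of reducing to $\alpha^{\sigma}-\alpha$, and you split into the two-prime-factor versus prime-power cases where the paper applies Lemma \ref{invariant}(ii) uniformly to the reduced denominator $N$.
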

\begin{proof}
Let $\alpha=h_{m,n}(\theta_K)$.
Since $\alpha$ is a unit of $\mathcal{O}_L$ by Proposition \ref{generate}, we have the inclusion
\begin{equation*}
\mathcal{O}_L\supseteq\mathcal{O}_F[\alpha].
\end{equation*}
\par
As for the converse, let $\beta$ be an element of $\mathcal{O}_L$. Since $L=F(\alpha)$ by
Proposition \ref{generate}, we can express $\beta$ as
\begin{equation}\label{beta}
\beta=c_0+c_1\alpha+\cdots+c_{\ell-1}\alpha^{\ell-1}\quad\textrm{for some}~c_0,c_1,\ldots,c_{\ell-1}\in F,
\end{equation}
where $\ell=[L:F]$. In order to prove the converse inclusion $\mathcal{O}_L\subseteq\mathcal{O}_F[\alpha]$ it
suffices to show that $c_0,c_1,\ldots, c_{\ell-1}\in\mathcal{O}_F$.
Multiplying
both sides of (\ref{beta}) by $\alpha^k$ ($k=0,1,\ldots,\ell-1$)
 yields
\begin{equation*}
c_0\alpha^k+c_1\alpha^{k+1}+\cdots
+c_{\ell-1}\alpha^{k+\ell-1}=\beta\alpha^k.
\end{equation*}
Now, we take the trace $\mathrm{Tr}=\mathrm{Tr}_{L/F}$ to obtain
\begin{equation*}
c_0\mathrm{Tr}(\alpha^k)+c_1\mathrm{Tr}(\alpha^{k+1})
+\cdots
+c_{\ell-1}\mathrm{Tr}(\alpha^{k+\ell-1})=\mathrm{Tr}(\beta\alpha^k).
\end{equation*}
Then we achieve a linear system (in unknowns $c_0,c_1,c_2,\ldots, c_{\ell-1}$)
\begin{equation*}
T\left[\begin{matrix}c_0\\c_1\\
\vdots\\c_{\ell-1}
\end{matrix}\right]=
\left[\begin{matrix}\mathrm{Tr}(\beta)\\
\mathrm{Tr}(\beta\alpha)\\
\vdots\\
\mathrm{Tr}(\beta\alpha^{\ell-1})\end{matrix}\right],~\textrm{where}~T=
\left[\begin{matrix}
\mathrm{Tr}(1) & \mathrm{Tr}(\alpha) & \cdots & \mathrm{Tr}(\alpha^{\ell-1})\\
\mathrm{Tr}(\alpha) & \mathrm{Tr}(\alpha^2) & \cdots & \mathrm{Tr}(\alpha^\ell)\\
\vdots & \vdots &\ddots& \vdots\\
\mathrm{Tr}(\alpha^{\ell-1}) & \mathrm{Tr}(\alpha^{\ell}) & \cdots
& \mathrm{Tr}(\alpha^{2\ell-2})
\end{matrix}\right].
\end{equation*}
Since $\alpha,\beta\in\mathcal{O}_L$, all the entries of $T$ and $\left[\begin{matrix}\mathrm{Tr}(\beta)\\
\mathrm{Tr}(\beta\alpha)\\
\vdots\\
\mathrm{Tr}(\beta\alpha^{\ell-1})\end{matrix}\right]$ lie in $\mathcal{O}_F$.
Hence we get
\begin{equation*}
c_0,c_1,\ldots, c_{\ell-1}\in\frac{1}{\det(T)}\mathcal{O}_F.
\end{equation*}
Let $\alpha_1,\alpha_2,\ldots,\alpha_\ell$ be the conjugates of $\alpha$ via $\mathrm{Gal}(L/F)$.
We then derive that
\begin{eqnarray}
\det(T)&=&\left|\begin{matrix}\sum_{k=1}^\ell\alpha_k^0
& \sum_{k=1}^\ell\alpha_k^1 & \cdots & \sum_{k=1}^\ell\alpha_k^{\ell-1}\\
\sum_{k=1}^\ell\alpha_k^1
& \sum_{k=1}^\ell\alpha_k^2 & \cdots & \sum_{k=1}^\ell\alpha_k^{\ell}\\
\vdots
& \vdots & \ddots & \vdots\\
\sum_{k=1}^\ell\alpha_k^{\ell-1}
& \sum_{k=1}^\ell\alpha_k^\ell & \cdots & \sum_{k=1}^\ell\alpha_k^{2\ell-2}\\
\end{matrix}\right|\nonumber\\
&=&\left|\begin{matrix}
\alpha_1^0 & \alpha_2^0 & \cdots & \alpha_\ell^0\\
\alpha_1^1 & \alpha_2^1 & \cdots & \alpha_\ell^1\\
\vdots & \vdots & \ddots & \vdots\\
\alpha_1^{\ell-1} & \alpha_2^{\ell-1} & \cdots & \alpha_\ell^{\ell-1}
\end{matrix}\right|
\cdot
\left|\begin{matrix}
\alpha_1^0 & \alpha_1^1 & \cdots & \alpha_1^{\ell-1}\\
\alpha_2^0 & \alpha_2^1 & \cdots & \alpha_2^{\ell-1}\\
\vdots& \vdots & \ddots & \vdots\\
\alpha_\ell^0 & \alpha_\ell^1 & \cdots & \alpha_\ell ^{\ell-1}
\end{matrix}\right|\nonumber\\
&=&\prod_{1\leq k_1<k_2\leq \ell}(\alpha_{k_1}-\alpha_{k_2})^2\quad\textrm{by the Van der Monde determinant formula}\nonumber\\
&=&\pm\prod_{\sigma_1\neq\sigma_2\in\mathrm{Gal}(L/F)}(\alpha^{\sigma_1}-\alpha^{\sigma_2})\nonumber\\
&=&\pm\prod_{\sigma_1\neq\sigma_2\in\mathrm{Gal}(L/F)}
(\alpha^{\sigma_1\sigma_2^{-1}}-\alpha)^{\sigma_2}.\label{det(T)}
\end{eqnarray}
If $\sigma$ is a nonidentity element of $\mathrm{Gal}(L/F)$, then
by Lemma \ref{Shimura}(iii) one can set
$\sigma=tI_2$ for some $t\in\mathbb{N}$ such that
\begin{equation*}
\gcd(t,mn)=1,~
t\equiv\pm1\pmod{m}~\textrm{and}~   t\not\equiv\pm1\pmod{mn}.
\end{equation*}
Thus we deduce that
\begin{eqnarray*}
\alpha^\sigma-\alpha&=&h_{m,n}(\theta_K)^\sigma-h_{m,n}(\theta_K)\\
&=&\bigg(\frac{f_{\left[\begin{smallmatrix}0\\1/mn\end{smallmatrix}\right]}(\theta_K)
  -f_{\left[\begin{smallmatrix}1/m\\0\end{smallmatrix}\right]}(\theta_K)}
  {f_{\left[\begin{smallmatrix}0\\1/m\end{smallmatrix}\right]}(\theta_K)
  -f_{\left[\begin{smallmatrix}1/m\\0\end{smallmatrix}\right]}(\theta_K)}\bigg)^\sigma
  -\frac{f_{\left[\begin{smallmatrix}0\\1/mn\end{smallmatrix}\right]}(\theta_K)
  -f_{\left[\begin{smallmatrix}1/m\\0\end{smallmatrix}\right]}(\theta_K)}
  {f_{\left[\begin{smallmatrix}0\\1/m\end{smallmatrix}\right]}(\theta_K)
  -f_{\left[\begin{smallmatrix}1/m\\0\end{smallmatrix}\right]}(\theta_K)}
  \quad\textrm{by (\ref{hf})}\\
  &=&
  \frac{f_{{}^t\sigma\left[\begin{smallmatrix}0\\1/mn\end{smallmatrix}\right]}(\theta_K)
  -f_{\left[\begin{smallmatrix}1/m\\0\end{smallmatrix}\right]}(\theta_K)}
  {f_{\left[\begin{smallmatrix}0\\1/m\end{smallmatrix}\right]}(\theta_K)
  -f_{\left[\begin{smallmatrix}1/m\\0\end{smallmatrix}\right]}(\theta_K)}
  -\frac{f_{\left[\begin{smallmatrix}0\\1/mn\end{smallmatrix}\right]}(\theta_K)
  -f_{\left[\begin{smallmatrix}1/m\\0\end{smallmatrix}\right]}(\theta_K)}
  {f_{\left[\begin{smallmatrix}0\\1/m\end{smallmatrix}\right]}(\theta_K)
  -f_{\left[\begin{smallmatrix}1/m\\0\end{smallmatrix}\right]}(\theta_K)}
  \quad\textrm{by Lemmas \ref{Shimura}(iii) and \ref{Fricketransf}}\\
  &=&\frac{f_{\left[\begin{smallmatrix}0\\t/mn\end{smallmatrix}\right]}(\theta_K)
  -f_{\left[\begin{smallmatrix}0\\1/mn\end{smallmatrix}\right]}(\theta_K)}
  {f_{\left[\begin{smallmatrix}0\\1/m\end{smallmatrix}\right]}(\theta_K)
  -f_{\left[\begin{smallmatrix}1/m\\0\end{smallmatrix}\right]}(\theta_K)}\\
  &=&\frac{\wp_{\left[\begin{smallmatrix}0\\t/mn\end{smallmatrix}\right]}(\theta_K)
  -\wp_{\left[\begin{smallmatrix}0\\1/mn\end{smallmatrix}\right]}(\theta_K)}
  {\wp_{\left[\begin{smallmatrix}0\\1/m\end{smallmatrix}\right]}(\theta_K)
  -\wp_{\left[\begin{smallmatrix}1/m\\0\end{smallmatrix}\right]}(\theta_K)}
  \quad\textrm{by the definition (\ref{Fricke})}\\
  &=&\frac{g_{\left[\begin{smallmatrix}0\\(t+1)/mn\end{smallmatrix}\right]}(\theta_K)
  g_{\left[\begin{smallmatrix}0\\(t-1)/mn\end{smallmatrix}\right]}(\theta_K)
  g_{\left[\begin{smallmatrix}0\\1/m\end{smallmatrix}\right]}(\theta_K)^2
  g_{\left[\begin{smallmatrix}1/m\\0\end{smallmatrix}\right]}(\theta_K)^2}
  {g_{\left[\begin{smallmatrix}1/m\\1/m\end{smallmatrix}\right]}(\theta_K)
  g_{\left[\begin{smallmatrix}-1/m\\1/m\end{smallmatrix}\right]}(\theta_K)
  g_{\left[\begin{smallmatrix}0\\t/mn\end{smallmatrix}\right]}(\theta_K)^2
  g_{\left[\begin{smallmatrix}0\\1/mn\end{smallmatrix}\right]}(\theta_K)^2}
  \quad\textrm{by Lemma \ref{wtog}}.
\end{eqnarray*}
Since each of
\begin{equation*}
\left[\begin{matrix}
0\\1/m
\end{matrix}\right],
\left[\begin{matrix}1/m\\0\end{matrix}\right],
\left[\begin{matrix}1/m\\1/m\end{matrix}\right],
\left[\begin{matrix}-1/m\\1/m\end{matrix}\right],
\left[\begin{matrix}0\\t/mn\end{matrix}\right],
\left[\begin{matrix}0\\1/mn\end{matrix}\right]
\end{equation*}
has the primitive denominator with at least two prime factors by the hypothesis (i),
the values
\begin{equation*}
g_{\left[\begin{smallmatrix}
0\\1/m
\end{smallmatrix}\right]}(\theta_K),
g_{\left[\begin{smallmatrix}1/m\\0\end{smallmatrix}\right]}(\theta_K),
g_{\left[\begin{smallmatrix}1/m\\1/m\end{smallmatrix}\right]}(\theta_K),
g_{\left[\begin{smallmatrix}-1/m\\1/m\end{smallmatrix}\right]}(\theta_K),
g_{\left[\begin{smallmatrix}0\\t/mn\end{smallmatrix}\right]}(\theta_K),
g_{\left[\begin{smallmatrix}0\\1/mn\end{smallmatrix}\right]}(\theta_K)
\end{equation*}
are units as algebraic integers by Lemmas \ref{Siegellemma}(iii) and \ref{jintegral}.
On the other hand, let us put
\begin{equation*}
\frac{t+1}{mn}=\frac{a}{N}\quad\textrm{for some relatively prime positive integers $N$ and $a$}.
\end{equation*}
Since $t\not\equiv\pm1\pmod{mn}$, we get $N\geq2$.
Moreover, $(N)=N\mathcal{O}_K$ is not a power of a prime ideal by the hypothesis (ii).
So $g_{\left[\begin{smallmatrix}0\\(t+1)/mn\end{smallmatrix}\right]}(\theta_K)
=g_{\left[\begin{smallmatrix}0\\a/N\end{smallmatrix}\right]}(\theta_K)$
is a unit as an algebraic integer by Lemma \ref{invariant}(ii).
In a similar fashion, we also see that  $g_{\left[\begin{smallmatrix}0\\(t-1)/mn\end{smallmatrix}\right]}(\theta_K)$ is a unit as an algebraic integer. Therefore $\alpha^\sigma-\alpha$ is a unit of $\mathcal{O}_L$.
This implies that
$\det(T)$ is a unit of $\mathcal{O}_F$ by (\ref{det(T)}), and hence we get the converse inclusion
\begin{equation*}
  \mathcal{O}_L\subseteq\mathcal{O}_F[\alpha],
\end{equation*}
as desired. This completes the proof.
\end{proof}

\bibliographystyle{amsplain}

\address{
Pohang Mathematics Institute\\
POSTECH \\
Pohang 790-784\\
Republic of Korea} {hoyunjung@postech.ac.kr}
\address{
Department of Mathematical Sciences \\
KAIST \\
Daejeon 305-701 \\
Republic of Korea} {jkkoo@math.kaist.ac.kr}
\address{
Department of Mathematics\\
Hankuk University of Foreign Studies\\
Yongin-si, Gyeonggi-do 449-791\\
Republic of Korea } {dhshin@hufs.ac.kr}

\end{document}